\documentclass[11pt]{amsart}
\numberwithin{equation}{section}

\usepackage{amsmath,amssymb,amscd,amsxtra,latexsym,
graphicx,epsfig,euscript,amsthm,mathscinet}
\usepackage{amsmath,amsfonts,amsthm,amsopn,amssymb,enumitem}
\usepackage{graphicx}
\usepackage{float}
\usepackage{a4wide}
\usepackage{color}
\usepackage{esint}
\usepackage{palatino}
\usepackage[backend=bibtex,style=numeric,sorting=nyt,
giveninits=true,isbn=false,url=false,doi=false]{biblatex}
\renewbibmacro{in:}{\ifentrytype{article}{}{\printtext{\bibstring{in}\intitlepunct}}}
\DeclareFieldFormat*{title}{#1}

\usepackage{mathrsfs}
\usepackage[colorlinks]{hyperref}
\AtBeginDocument{%
  \hypersetup{%
    linkcolor=blue,%
    citecolor=green,%
  }%
}
\usepackage{cleveref}
\crefname{equation}{}{}
\crefname{figure}{}{}

\makeatletter
\@namedef{subjclassname@2020}{\textup{2020} Mathematics Subject Classification}
\makeatother

\newtheorem{theorem}{Theorem}[section]
\newtheorem{lemma}[theorem]{Lemma}

\newtheorem{definition}[theorem]{Definition}

\newtheorem{remark}[theorem]{Remark}

\begin{document}

\title[Boundary pointwise regularity on $C^{1,\mathrm{Dini}}$ domains]{Boundary pointwise regularity for the $p$-Laplace equation on $C^{1,\mathrm{Dini}}$ domains}

\author{Xuemei Li}
\address{(X. ~Li)
  School of Mathematical Sciences, University of Jinan, Jinan, P.R.China 250022}
\email{sms\_lixm@ujn.edu.cn}

\author{Yuanyuan Lian}
\address{(Y.~Lian)
  Departamento de Geometr\'{i}a y Topolog\'{i}a, Departamento de An\'alisis matem\'atico, Universidad de Granada,   Campus Fuentenueva, 18071 Granada, Spain}
\email{lianyuanyuan.hthk@gmail.com; yuanyuanlian@correo.ugr.es}

\thanks{{\bf Acknowledgements.}
X. L. has been supported by National Natural Science Foundation of China (No. 12401257) and the Natural Science Foundation of Shandong Province, China (No. ZR2024QA045).
Y. L. has been supported by the Grants PID2020-117868GB-I00 and PID2023-150727NB-I00 of the MICIN/AEI}

\keywords{Boundary Lipschitz regularity; Hopf lemma; boundary $C^1$ regularity; $C^{1,\mathrm{Dini}}$ domains; $p$-Laplace equations.}

\subjclass[2020]{35B65; 35J25; 35J92.}

\maketitle

\noindent

\noindent
\begin{abstract}
  This paper investigates the boundary pointwise regularity of solutions to the p-Laplace equation. By treating curved boundaries as perturbations of flat ones, we establish the boundary pointwise Lipschitz regularity under the exterior $C^{1,\mathrm{Dini}}$ condition and the Hopf lemma under the interior $C^{1,\mathrm{Dini}}$ condition. We also establish the boundary pointwise $C^1$ regularity under the $C^{1,\mathrm{Dini}}$ condition by a compactness argument.
\end{abstract}

\section{Introduction}
\label{section 1}

In this paper, we study the boundary pointwise regularity of solutions to the $p$-Laplace equation
\begin{equation}\label{e.p-lap}
\begin{cases}
	\mathrm{div}\left(|D u|^{p-2} D u\right) = f & \text{in } \Omega \cap B_1, \\
	u = g & \text{on } \partial\Omega \cap B_1,
\end{cases}
\end{equation}
where $\Omega\subset \mathbb{R}^n$ is a bounded domain and $2\leq p<+\infty$. More precisely, we investigate the regularity of solutions near a boundary point $x_0$ under three different assumptions: the exterior $C^{1,\mathrm{Dini}}$ condition, the interior $C^{1,\mathrm{Dini}}$ condition and the $C^{1,\mathrm{Dini}}$ condition (see \Cref{D_C1Dini}).

\medskip

The $p$-Laplacian is the Euler-Lagrange equation associated with the $p$-Dirichlet energy and arises naturally in various nonlinear models, including non-Newtonian fluids, glaciology, and porous media. Unlike the classical Laplace operator, the $p$-Laplacian is degenerate when $p>2$ and singular when $1<p<2$. As a consequence, classical linear techniques are generally not applicable, and one cannot expect $C^2$ regularity in general.

An important direction in regularity theory is to determine the optimal smoothness, typically $C^{1,\alpha}$, that solutions retain despite degeneracy or singularity.
For interior $C^{1,\alpha}$ regularity, Ural'tseva \cite{MR0244628} made an early foundational contribution to the regularity theory of degenerate elliptic equations. Uhlenbeck \cite{MR474389} established gradient H\"{o}lder regularity for a class of nonlinear elliptic systems, including $p$-harmonic systems in the degenerate range $p \geq 2$. Later, Evans \cite{MR672713} gave an elegant new proof for the interior $C^{1,\alpha}$ regularity when $p \geq 2$. Lewis \cite{MR721568} proved the interior $C^{1,\alpha}$ regularity in the singular range $1 < p < 2$. DiBenedetto \cite{MR709038} and Tolksdorf \cite{MR727034} extended these results to more general quasilinear elliptic equations with $p$-growth structures.


For boundary regularity, Lieberman \cite{MR969499} proved global $C^{1,\alpha}$ regularity for both Dirichlet and Neumann problems, under suitable smoothness assumptions on the boundary. For rough boundaries, Kilpel\"{a}inen and Mal\'{y} \cite{MR1264000} established a Wiener criterion for regular boundary points of quasilinear equations of $p$-Laplace type, formulated in terms of the $p$-capacity of the complement. Lee, Lian, Yun, and Zhang \cite{LLYZ2025} established boundary pointwise and global $C^{1,\alpha}$ regularity for viscosity solutions of parabolic $p$-Laplace-type equations.


\medskip

In this paper, we study boundary pointwise Lipschitz regularity, the Hopf lemma and boundary pointwise $C^1$ regularity for the $p$-Laplacian. The core question is how weak the assumptions on the right-hand side $f$ and the boundary $\partial \Omega$ can be while guaranteeing such boundary regularity.

Regarding global Lipschitz regularity, Cianchi and Maz'ya \cite{MR2763349, MR3165732} established sharp rearrangement estimates and global gradient bounds for nonlinear Dirichlet and Neumann problems. In addition, over the last two decades, nonlinear potential theory has led to significant progress in the regularity theory of $p$-Laplace type equations. Duzaar and Mingione \cite{MR2823872} established pointwise gradient estimates in terms of nonlinear Wolff-type potentials. Later, Kuusi and Mingione \cite{MR3004772} obtained pointwise gradient bounds in terms of the classical first-order Riesz potential and derived criteria for local Lipschitz continuity and gradient continuity. We refer to Kuusi and Mingione \cite{MR3174278} for a comprehensive account of these potential estimates and their applications. Dong and Zhu \cite{MR4396694, MR4768412} obtained gradient estimates and gradient-continuity criteria for singular parabolic and elliptic $p$-Laplace type equations with measure data, using suitable parabolic and elliptic Riesz potentials.

\medskip

We prove the boundary pointwise Lipschitz regularity, the Hopf lemma and the boundary pointwise $C^1$ regularity by treating curved boundaries as perturbations of flat ones. A key observation is that solutions of the homogeneous $p$-Laplace equation belong to suitable Pucci classes, which allows us to apply the boundary $C^{1,\alpha}$ regularity theory on flat domains. The boundary pointwise Lipschitz regularity and the Hopf lemma are established through an iterative perturbation argument in the spirit of \cite{MR4713521} and its parabolic counterpart \cite{DLL_2025}, whereas the boundary pointwise $C^1$ regularity is obtained by a compactness method. The combination of perturbation arguments and compactness techniques is widely used in regularity theory (see, for instance, \cite{MR4088470, lian2020pointwise}).

\medskip

Before stating our main results, we introduce some notation and preliminary notions. In order to derive our regularity results, we need to consider a more general equation:
\begin{equation}\label{e.p.gene}
\mathrm{div}\left(|Du+\xi|^{p-2}(Du+\xi) \right)=f \quad  ~~\mbox{ in }~~  \Omega,
\end{equation}
where $\xi \in \mathbb{R}^n$.
First, let us recall the definitions of viscosity and weak solutions for the $p$-Laplace equation; see \cite[Definition 2.3]{MR1871417} and \cite[Definition 2.1]{MR2915869} for more details.
\begin{definition}\label{V}
Let $u, f\in C(\Omega)$. We say that $u$ is a viscosity subsolution (resp. supersolution) of \eqref{e.p.gene} when the following condition holds: if $x_0\in \Omega$, $\varphi\in C^2(\Omega)$ and $u-\varphi$ attains a local maximum (resp. minimum) at $x_0$, then
\begin{equation*}
  \mathrm{div}\left(|D \varphi+\xi|^{p-2} (D \varphi+\xi) \right) (x_0)\geq f(x_0) \ \quad ~(\mbox{resp. }~~ \mathrm{div}\left(|D \varphi+\xi|^{p-2} (D \varphi+\xi) \right) (x_0) \leq f(x_0)).
\end{equation*}
We say that $u$ is a viscosity solution of \eqref{e.p.gene} when it is both a viscosity subsolution and a viscosity supersolution.
\end{definition}

\begin{definition}\label{weak_solutions}
Let $f \in C(\Omega)$. A function $u \in W_{\mathrm{loc}}^{1,p}(\Omega)$ is a weak subsolution (resp. supersolution) to \eqref{e.p.gene} in $\Omega$ if
\begin{equation*}\label{e.weak}
\begin{aligned}
   & \int_{\Omega} |Du(x)+\xi|^{p-2}(Du(x)+\xi) \cdot D\psi(x) \, dx \leq -\int_{\Omega} \psi(x)f(x) \, dx\\
    \ \quad ~(\mbox{resp. }~~
   & \int_{\Omega} |Du(x)+\xi|^{p-2}(Du(x)+\xi) \cdot D\psi(x) \, dx \geq -\int_{\Omega} \psi(x)f(x) \, dx)
\end{aligned}
\end{equation*}
for every nonnegative $\psi \in C_0^\infty(\Omega)$.

A weak solution is both a subsolution and a supersolution.
\end{definition}

\begin{remark}\label{vis_weak}
Viscosity solutions are equivalent to weak solutions; see \cite{MR2915869, MR1871417, MR3918385}. We will work with both viscosity and weak solutions throughout this paper.
\end{remark}

Next, we recall the definitions of the Pucci extremal operators and the associated Pucci classes.
\begin{definition}\label{d-Sf}
Let $\mathscr{S}^n$ denote the space of real $n\times n$ symmetric matrices. For $M\in \mathscr{S}^n$, let $\mu_i$ $(1\leq i\leq n)$ be the eigenvalues of $M$. For $0<\lambda\leq \Lambda$, the Pucci extremal operators are defined by
\begin{equation*}
\begin{aligned}
  &\mathcal{M}^+(M,\lambda,\Lambda)=\Lambda \sum_{\mu_i>0}\mu_i 
+\lambda \sum_{\mu_i<0}\mu_i, \\
  &\mathcal{M}^-(M,\lambda,\Lambda)=\lambda \sum_{\mu_i>0}\mu_i 
+\Lambda \sum_{\mu_i<0}\mu_i.
\end{aligned}
\end{equation*}

We define the corresponding Pucci classes as follows. We say that $u\in \underline{\mathcal S}(\lambda,\Lambda,f)$ if $u$ is an $L^n$-viscosity subsolution (see \cite{MR1376656}, also \cite[Definition 1.9]{MR4713521}) of
\begin{equation*}
  \mathcal{M}^+(D^2u,\lambda,\Lambda)= f.
\end{equation*}
Similarly, we denote $u\in \overline{\mathcal S}(\lambda,\Lambda,f)$ if $u$ is an $L^n$-viscosity supersolution of
\begin{equation*}
  \mathcal{M}^-(D^2u,\lambda,\Lambda)= f.
\end{equation*}
We further define
\begin{equation}\label{SC2Sf}
\begin{aligned}
  &\mathcal S(\lambda,\Lambda,f)=\underline{\mathcal S}(\lambda,\Lambda,f)\cap \overline{\mathcal S}(\lambda,\Lambda,f).\\
\end{aligned}
\end{equation}

\end{definition}

\medskip

We now fix some notation. Let $e_i=(0,\ldots,0,1,\ldots,0)$ be the $i$-th standard coordinate vector in $\mathbb{R}^n$. We write $x'=(x_1,\ldots,x_{n-1})$ and $x=(x',x_n)$. Set
\begin{equation*}
  B_r(x_0)=\{x\in \mathbb{R}^n: |x-x_0|<r\}, \quad
  B^+_r (x_0) = B_r(x_0) \cap  \{x\in \mathbb{R}^n:x_{n}>x_{0,n}\},
\end{equation*}
where $x_{0,n}$ denotes the $n$-th coordinate of $x_0$.
Moreover, let
\begin{equation*}
  B_r'(x_0)=\{x'\in \mathbb{R}^{n-1}: |x'-x'_0|<r\},\quad T_r(x_0)= \{(x',x_{0,n}): x'\in B_r'(x_0)\},
\end{equation*}
and
\begin{equation*}
  \Omega_r (x_0)= \Omega \cap B_r(x_0), \quad
  \Omega_r^+ (x_0)=\Omega \cap B_r^+ (x_0), \quad
   (\partial\Omega)_r (x_0)= \partial\Omega\cap B_r(x_0).
\end{equation*}
When $x_0=0$, we omit $x_0$ from the notation. Additionally, we use the Einstein summation convention throughout the paper.

For a bounded domain $\Omega \subset \mathbb{R}^n$, we use the normalized $L^q$ norm
\begin{equation*}
  \|f\|^*_{L^q(\Omega)}:=\left(\frac{1}{|\Omega|}\int_{\Omega} |f|^q\right)^{\frac{1}{q}},\quad \forall 1\leq q<+\infty,
\end{equation*}
where $|\Omega|$ denotes the Lebesgue measure of $\Omega$. This normalization is convenient since $\|f\|^*_{L^q(\Omega)}$ has the same scaling as $f$.

We also recall the notion of a Dini function.
\begin{definition}
A function $\omega: (0,+\infty) \to [0,+\infty)$ is called a Dini function if it is nonnegative, nondecreasing, and satisfies the Dini condition
\begin{equation}\label{Dini}
\int_{0}^{r_0} \frac{\omega(r)}{r} dr < +\infty,
\end{equation}
for some $r_0 > 0$.
\end{definition}

We next introduce the definitions of the conditions on domains under which our main results are established.
\begin{definition}[$C^{1,\mathrm{Dini}}$ boundary conditions]\label{D_C1Dini}
Let $\Omega\subset \mathbb{R}^n$ be a bounded domain and let $\omega_{\Omega}$ be a Dini function. We say that $\Omega$ satisfies the exterior $C^{1,\mathrm{Dini}}$ condition at $x_0 \in \partial\Omega$ if there exist a constant $r_0 > 0$ and a coordinate system with origin at $x_0$ such that
\begin{equation}\label{e.Ex_C1Dini}
B_{r_0} \cap \{(x',x_n): x_n \leq -|x'|\omega_{\Omega}(|x'|)\} \subset B_{r_0} \cap \Omega^c.
\end{equation}

Similarly, we say that $\Omega$ satisfies the interior $C^{1,\mathrm{Dini}}$ condition at $x_0 \in \partial\Omega$ if there exist a constant $r_0 > 0$ and a coordinate system with origin at $x_0$ such that
\begin{equation}\label{e.In_C1Dini}
B_{r_0} \cap \{(x',x_n): x_n > |x'|\omega_{\Omega}(|x'|)\} \subset B_{r_0} \cap \Omega.
\end{equation}

Finally, we say that $\Omega$ satisfies the $C^{1,\mathrm{Dini}}$ condition at $x_0 \in \partial\Omega$ if $\Omega$ satisfies both the exterior and the interior $C^{1,\mathrm{Dini}}$ condition at $x_0$. In this case, we define
\begin{equation*}
  \|\partial\Omega\|_{C^{1,\mathrm{Dini}}(x_0)}
=\int_{0}^{r_0} \frac{\omega_{\Omega}(r)}{r} dr + \omega_{\Omega}(r_0).
\end{equation*}
\end{definition}

Finally, we introduce several notions of pointwise regularity for functions.

\begin{definition}[Pointwise regularity classes]\label{PW_reg}
Let $\Omega \subset \mathbb{R}^n$ be a bounded set, not necessarily a domain. We define the pointwise classes $C^{0,1}(x_0)$, $C^{1,\mathrm{Dini}}(x_0)$, and $C_q^{-1,\mathrm{Dini}}(x_0)$ as follows.

We say that $f$ is Lipschitz continuous at $x_0 \in \Omega$, or $f \in C^{0,1}(x_0)$, if there exist constants $r_0>0$ and $K>0$ such that
\begin{equation}\label{e.f.C01}
  |f(x) - f(x_0)| \leq K|x - x_0|, \quad \forall x \in \Omega\cap B_{r_0}(x_0).
\end{equation}
We define
\begin{equation*}
  [f]_{C^{0,1}(x_0)} := \inf \left\{ K | \eqref{e.f.C01} \mathrm{\ holds\ with\ } K \right\}
\end{equation*}
and
\begin{equation*}
  \|f\|_{C^{0,1}(x_0)} := \|f\|_{L^{\infty}(\Omega)} + [f]_{C^{0,1}(x_0)}.
\end{equation*}

Similarly, we say that $f$ is $C^{1,\mathrm{Dini}}$ at $x_0$, or $f \in C^{1,\mathrm{Dini}}(x_0)$, if there exist a vector $l$, a constant $r_0>0$, and a Dini function $\omega_f$ such that
\begin{equation}\label{e.f.C1Dini}
|f(x) - f(x_0) - l \cdot (x - x_0)| \leq |x - x_0|\omega_f(|x - x_0|), \quad \forall x \in \Omega\cap B_{r_0}(x_0).
\end{equation}
In this case, we denote $l$ by $\nabla f(x_0)$ and define
\begin{equation*}
  \|f\|_{C^{1}(x_0)} := |f(x_0)| + |l|, \quad
  [f]_{C^{1,\mathrm{Dini}}(x_0)} :=\int_{0}^{r_0}\frac{\omega_f(r)}{r}dr+\omega_f(r_0)
\end{equation*}
and
\begin{equation*}
  \|f\|_{C^{1,\mathrm{Dini}}(x_0)} := \|f\|_{C^{1}(x_0)} + [f]_{C^{1,\mathrm{Dini}}(x_0)}.
\end{equation*}
If $f\in C^{1,\mathrm{Dini}}(x)$ for every $x\in \Omega$, with the same $r_0$ and modulus $\omega_f$, and
\begin{equation*}
  \|f\|_{C^{1,\mathrm{Dini}}(\overline{\Omega})}:=  \sup_{x\in \Omega} \|f\|_{C^{1}(x)}+\sup_{x\in \Omega} [f]_{C^{1, \mathrm{Dini}}(x)}<+\infty,
\end{equation*}
then we say that $f\in C^{1,\mathrm{Dini}}(\overline{\Omega})$.

Finally, for $q\geq1$, we say that $f$ is $C_q^{-1,\mathrm{Dini}}$ at $x_0$, or $f \in C_q^{-1,\mathrm{Dini}}(x_0)$, if there exist a constant $r_0$ and a modulus of continuity $\omega_f$ such that
\begin{equation}\label{e.C-1Dini}
\|f\|^*_{L^q(\Omega\cap B_r(x_0))} \leq r^{-1}\omega_f (r), \quad \forall\, 0 < r < r_0
\end{equation}
and
\begin{equation*}
  \int_{0}^{r_0}\frac{\omega_f^{1/(p-1)}(r)}{r}dr<\infty.
\end{equation*}
We define
\begin{equation*}
   \|f\|_{C_q^{-1,\mathrm{Dini}}(x_0)} :=\left(\int_{0}^{r_0}\frac{\omega_f^{1/(p-1)}(r)}{r}dr\right)^{p-1}+\omega_f(r_0).
\end{equation*}
If $f \in C_q^{-1,\mathrm{Dini}}(x)$ for every $x \in \Omega$, with the same $r_0$ and modulus $\omega_f$, and
\begin{equation*}
  \|f\|_{C_q^{-1,\mathrm{Dini}}(\overline{\Omega})} := \sup_{x \in \Omega} \|f\|_{C_q^{-1,\mathrm{Dini}}(x)} < +\infty,
\end{equation*}
then we say that $f \in C_q^{-1,\mathrm{Dini}}(\overline{\Omega})$.
\end{definition}

\begin{remark}
Since our primary concern is the boundary pointwise regularity, we shall work locally near a boundary point throughout the paper. Hence, after a translation and a scaling, we may assume without loss of generality that $0\in\partial\Omega$ and $r_0=1$ in \Cref{D_C1Dini,PW_reg}.
\end{remark}

Now we are in a position to state our main results. The first one is the \emph{boundary pointwise Lipschitz regularity}.

\begin{theorem}\label{t.Lip}
Let $u$ be a viscosity solution of \eqref{e.p-lap}. Suppose that $\Omega$ satisfies the exterior $C^{1,\mathrm{Dini}}$ condition at $0\in \partial \Omega$ and
\begin{equation*}
 f \in C^{-1,\mathrm{Dini}}_q(0),\quad g \in C^{1,\mathrm{Dini}}(0)
\end{equation*}
for some  $q > n/p$, and $1/p + 1/q \leq 1$.
Then $u \in C^{0,1}(0)$, i.e.,
\begin{equation}\label{e.t.Lip}
|u(x) - u(0)| \leq C|x|\left(\|u\|_{L^\infty(\Omega \cap B_1)} + \|f\|^{1/(p-1)}_{C^{-1,\mathrm{Dini}}_q(0)} + \|g\|_{C^{1,\mathrm{Dini}}(0)}\right),
\forall~~x \in \Omega \cap B_1,
\end{equation}
where $C$ depends only on $n,p,q$, $\omega_\Omega$, $\omega_f$ and $\omega_g$.
\end{theorem}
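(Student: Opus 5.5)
We argue by an iterative perturbation scheme in the spirit of \cite{MR4713521}: the curved boundary is treated as a perturbation of a flat one, and at each dyadic scale $\eta^k$ we approximate $u$ by a function of the form $a_k x_n$, which is a boundary $C^{1,\alpha}$-type object on the flat domain. After the normalizations (translation, scaling, $r_0=1$, subtracting $g(0)+\nabla g(0)\cdot x$, which changes the equation to the $\xi$-shifted form \eqref{e.p.gene} with $\xi=\nabla g(0)$), we may assume $g(0)=0$, $|\nabla g(0)|$ bounded and
\begin{equation*}
\|u\|_{L^\infty(\Omega_1)}+\|f\|^{1/(p-1)}_{C^{-1,\mathrm{Dini}}_q(0)}+\|g\|_{C^{1,\mathrm{Dini}}(0)}\le \delta,
\end{equation*}
with $\omega_\Omega(1)\le\delta$, for a small $\delta$ to be chosen. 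This reduction is the reason \eqref{e.p.gene} appears, and it is legitimate because the class of equations \eqref{e.p.gene} is stable under the affine change of unknown.

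The key step is the following inductive claim: there exist $0<\eta<1$ and a sequence $a_k$ with $|a_k-a_{k-1}|\le C A_k$ such that
\begin{equation*}
\|u-a_k x_n\|_{L^\infty(\Omega_{\eta^k})}\le \eta^k A_k,
\end{equation*}
where $A_k$ is built from $\omega_\Omega(\eta^{k})$, $\omega_g(\eta^k)$, $\omega_f^{1/(p-1)}(\eta^k)$ and a geometric tail $\eta^{\alpha k}$ (e.g.\ $A_k=\sum_{i\le k}\eta^{\alpha(k-i)}(\omega_\Omega+\omega_g+\omega_f^{1/(p-1)})(\eta^i)$). The Dini conditions give $\sum_k A_k<\infty$, which yields $|a_k|\le C$ uniformly. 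Then for $\eta^{k+1}\le |x|<\eta^k$ we obtain $|u(x)|\le |a_k||x|+\eta^kA_k\le C|x|$, which is \eqref{e.t.Lip}.

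For the inductive step, we rescale $v(y)=(u(\eta^k y)-a_k\eta^k y_n)/(\eta^k A_k)$. Then $v$ solves an equation of type \eqref{e.p.gene} with $\xi$ proportional to $a_k/A_k$ (possibly large), with small right-hand side and small boundary data, on a domain whose boundary lies within $\omega_\Omega(\eta^k)/A_k\le 1$ below the flat plane. We compare $v$ on $\Omega_1$ with the solution $w$ of the same equation in $B_1^+$ (or in $B^+_{1}-\varepsilon e_n$), taking boundary values $v$ on the curved part and $0$ on the flat part. The weak formulation with $q>n/p$, $1/p+1/q\le1$ controls the $f$-perturbation via an $L^\infty$ estimate, and the comparison principle together with boundary H\"older estimates (the exterior condition) controls the boundary-shift perturbation. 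The observation that solutions of the homogeneous, possibly $\xi$-shifted, $p$-Laplace equation lie in a Pucci class $\mathcal S(\lambda,\Lambda,0)$ (linearizing $|Dw+\xi|^{p-2}(I+(p-2)\nu\otimes\nu)$, at least where the gradient is nondegenerate; in general one argues via viscosity test functions) gives flat-boundary $C^{1,\alpha}$ estimates: $|w-\bar a y_n|\le C|y|^{1+\alpha}$ with $|\bar a|\le C$. Choosing $\eta$ with $C\eta^{\alpha}\le 1/2$ closes the induction with $a_{k+1}=a_k+A_k\bar a$.

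The main obstacle is uniformity of the Pucci ellipticity constants when $\xi$ is large and $p>2$. Since $|Dw+\xi|^{p-2}$ degenerates or blows up, we first try to divide by $|Dw+\xi|^{p-2}$, i.e.\ to use the non-divergence form $\Delta w+(p-2)\langle D^2w\,\nu,\nu\rangle=0$ with $\nu=(Dw+\xi)/|Dw+\xi|$. This is homogeneous in the degenerate sense, so ellipticity is $\min(1,p-1)$ to $\max(1,p-1)$, independent of $\xi$. The right-hand side $f$ cannot be normalized in the same way, which is why $f$ is handled only through the divergence-form $L^\infty$ perturbation estimate, with scaling $\omega_f^{1/(p-1)}$ for large $|\xi|$ and the bound getting better as $|\xi|$ grows. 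Verifying that this perturbation estimate is uniform in $\xi$ is where we expect the real work to be.
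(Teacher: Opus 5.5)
\textbf{The gap: the inductive claim is two-sided.} Your ingredients are the right ones: the affine normalization giving the shifted equation, the Pucci class for the homogeneous shifted equation, the flat $C^{1,\alpha}$ estimate, and an $L^q$ comparison for $f$. But the claim $\|u-a_kx_n\|_{L^\infty(\Omega_{\eta^k})}\le\eta^kA_k$ with $\sum_kA_k<\infty$ is false under the exterior condition alone.

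Since $|a_k-a_{k-1}|\le CA_k$, the slopes converge to some $a$, and your claim would give $|u(x)-ax_n|=o(|x|)$ on $\Omega$. That is $u\in C^1(0)$, even $C^{1,\alpha}(0)$ when $\omega_\Omega=\omega_f=\omega_g=0$. This is the conclusion of Theorem~\ref{t.C1}, which also needs the interior condition. The exterior condition only keeps $\partial\Omega$ above $x_n=-|x'|\omega_\Omega(|x'|)$. It says nothing about how far $\Omega^c$ reaches into $\{x_n>0\}$.

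A counterexample: take $n=p=2$, $f=g=0$, and $\Omega=B_1^+\setminus\bigcup_{j\ge j_0}\overline{B_{\rho_j}(2^{-j}e_2)}$ with $\rho_j=2^{-j}e^{-Kj^2}$ and $K$ large. Let $u$ be harmonic in $\Omega$ with $u=x_2$ on $\partial B_1$. Then $\omega_\Omega\equiv0$ and $0\le u\le x_2$. Dominate $x_2-u$ by $\sum_j2^{1-j}G_j/\min_{\partial B_{\rho_j}(2^{-j}e_2)}G_j$, where $G_j$ is the Green function of $B_1^+$ with pole $2^{-j}e_2$; its minimum on the small circle is about $Kj^2/(2\pi)$. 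This gives $u\ge x_2/2$ on the ray $x_1=x_2$. On the other hand, a logarithmic barrier around each disk gives $u(y_j)/|y_j|\to0$ at the points $y_j=2^{-j}e_2+2\rho_je_1\in\Omega$. So no $ax_2$ approximates $u$ to order $o(|x|)$, although $u$ is Lipschitz at $0$.

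\textbf{Where your step breaks, and the fix.} The failing step is your comparison of the rescaled solution $v$ with the half-ball solution $w$. On the part of $\partial\Omega$ lying inside $B_1^+$, $v=\hat g\approx0$, while $w\approx\bar a y_n$ is of order one there. So the comparison principle gives no two-sided closeness. Boundary H\"older estimates cannot repair this, because they need $\partial\Omega$ close to the hyperplane from both sides; compare the hypothesis $\operatorname{osc}_{B_1}\partial\Omega\le\delta$ in Lemma~\ref{d1}.

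The remedy, which is what the paper does, is to iterate a one-sided bound $\sup_{\Omega_{\eta^k}}(\tilde u-a_kx_n)\le\widetilde C\eta^kA_k$ with $a_k\ge0$. The step goes as follows.
\begin{enumerate}
\item At scale $r=\eta^k/2$, let $v$ solve the homogeneous equation with shift $a_ke_n+\xi_0$ in $B_r^+-r\omega(r)e_n$, with $v=0$ on the flat part and $v=\widetilde C\eta^kA_k$ on the spherical part. Then $v\ge0$, and its slope $a_r$ at $0$ is nonnegative.
\item On $\partial\Omega$ inside this half-ball, $\tilde u-a_kx_n-v\le\tilde g-a_kx_n\le|\tilde g|+a_kr\omega(r)$. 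This uses exactly $v\ge0$, $a_k\ge0$ and the exterior condition $x_n\ge-r\omega(r)$ there.
\item On the spherical part, the inductive hypothesis gives $\tilde u-a_kx_n\le v$.
\item Lemma~\ref{ABPw} absorbs $f$ additively, and Theorem~\ref{t.p-Lap.flat} applied to $v$ gives the update $a_{k+1}=a_k+a_r$.
\end{enumerate}
Running the same argument for $-u$ gives a lower bound with another nonnegative slope. The two one-sided linear bounds together give $|u(x)-u(0)|\le C|x|$.

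Uniformity in $\xi$ is also not the real obstacle. Lemma~\ref{l.vis.Pucci} puts solutions of the shifted homogeneous equation in $\mathcal S(1,p-1,0)$ for every $\xi$. The constant in Lemma~\ref{ABPw} is independent of $\xi$, because the monotonicity bound $\ge C|Du-Dv|^p$ does not see $\xi$. And since the errors are added at the original scale, you never divide $f$ by $A_k^{p-1}$.
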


The second result concerns the \emph{Hopf lemma}.
\begin{theorem}\label{t.Hopf}
Let $u$ be a viscosity solution of
\begin{equation*}
  \mathrm{div}\left(|Du|^{p-2}Du\right) = f\ \  \text{in } \Omega \cap B_1
\end{equation*}
with $u\geq 0$ and $u(0)=0$.
Suppose that $\Omega$ satisfies the interior $C^{1,\mathrm{Dini}}$ condition at $0\in \partial \Omega$ and $f \in C^{-1,\mathrm{Dini}}_q(0)$ with
\begin{equation}\label{e.c.Hopf}
\|f\|^{1/(p-1)}_{C^{-1,\mathrm{Dini}}_q(0)}\leq \varepsilon_0 \cdot u(e_n/2)
\end{equation}
for some $q > n/p$, and $1/p + 1/q \leq 1$, where $0<\varepsilon_0<1$ is a small constant depending only on $n,p,q$ and $\omega_\Omega$.
Then for any $l=(l_1,l_2,...,l_n)\in \mathbb{R}^n$ with $|l|=1$ and $l_n> 0$,
\begin{equation}\label{e.Hopf}
	u(rl)\geq c u(e_n/2) l_nr,\quad \forall~0<r<\eta,
\end{equation}
where $c$ depends only on $n,p,q$, $\omega_\Omega$ and $\eta$ depends also on $l$.
\end{theorem}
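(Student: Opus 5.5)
By homogeneity we normalize $u(e_n/2)=1$; the $p$-Laplacian is $(p-1)$-homogeneous, so $\tilde u=u/u(e_n/2)$ solves the equation with right-hand side $f/u(e_n/2)^{p-1}$, and \eqref{e.c.Hopf} becomes $\|\tilde f\|^{1/(p-1)}_{C^{-1,\mathrm{Dini}}_q(0)}\le\varepsilon_0$. The plan is to show that $u$ grows at least linearly in the $x_n$ direction at every scale, via an iterative perturbation argument. At each dyadic scale we compare $u$ with a multiple of a model solution on a flat half-ball, and we track a sequence of slopes $a_k$ that stays bounded below.

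\emph{Step 1 (initial positivity).} By the interior Harnack inequality for nonnegative weak solutions (valid since $f$ is small in $L^q$, $q>n/p$) together with the interior $C^{1,\mathrm{Dini}}$ condition, we get $u\ge c_0>0$ on a region such as $\{x_n\ge 1/8\}\cap B_{1/2}$, which lies inside $\Omega$ once $\omega_\Omega$ is small near $0$. After a further scaling we may assume $\omega_\Omega(1)$ is as small as needed.

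\emph{Step 2 (key lemma / one-step improvement).} Fix $\tau\in(0,1/2)$ small. Suppose that in $B_{\tau^k}$ we have $u\ge a_k\,(x_n-\tau^k\sigma_k)$ on the relevant set, where $\sigma_k$ accounts for the boundary perturbation, $u\ge0$, and $u\ge c\,a_k\tau^k$ near $\tau^k e_n/2$. Rescale to unit size by $v(y)=u(\tau^k y)/(a_k\tau^k)$. Then $v$ solves the $p$-Laplace equation with right-hand side of size $\omega_f(\tau^k)/a_k^{p-1}$, in a domain containing $\{y_n>\omega_\Omega(\tau^k)|y'|\}$.

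On the flat half-ball $B_1^+$, let $h$ be the solution of the homogeneous equation $\mathrm{div}(|Dh|^{p-2}Dh)=0$ with boundary data $0$ on $T_1$ and $\min(v,\cdot)$ on the curved part. This choice shifts the flat boundary up by $\omega_\Omega(\tau^k)$, so that $\{h>0\}$ lies inside the domain of $v$. As noted in the introduction, $h$ lies in a Pucci class with ellipticity constants depending only on $p$. Hence the boundary $C^{1,\alpha}$ theory on flat domains applies, and combined with the classical Hopf lemma in the flat case (from positivity of $h$ in the interior) it gives
\[
|h-\partial_n h(0)\,y_n|\le C|y|^{1+\alpha},\qquad \partial_n h(0)\ge c_1 .
\]
By the comparison principle for weak solutions, together with a barrier absorbing the right-hand side in $L^q$ (the $W^{1,p}$ and ABP-type estimate; this is where $q>n/p$ and $1/p+1/q\le1$ enter), we obtain $v\ge h - C(\omega_\Omega(\tau^k)+(\omega_f(\tau^k)/a_k^{p-1})^{1/(p-1)})$ in $B_{1/2}^+$.

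Choosing $\tau$ so that $C\tau^{\alpha}\le c_1/4$ and rescaling back, we get $a_{k+1}\ge a_k\,(1-C\omega_\Omega(\tau^k))-C\,\omega_f(\tau^k)^{1/(p-1)}$. The $\partial_n h(0)$ factor is absorbed by choosing the normalization so that the slope is multiplicatively preserved up to the error terms.

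\emph{Step 3 (summation).} Iterating, $\prod_k(1-C\omega_\Omega(\tau^k))\ge c>0$, because $\sum_k\omega_\Omega(\tau^k)\lesssim\int_0^1\omega_\Omega(r)/r\,dr<\infty$. Likewise, $\sum_k\omega_f(\tau^k)^{1/(p-1)}\lesssim\|f\|^{1/(p-1)}_{C^{-1,\mathrm{Dini}}_q(0)}\le\varepsilon_0$. Choosing $\varepsilon_0$ small keeps $a_k\ge c>0$ for all $k$, which yields $u(x)\ge c\,(x_n-C|x|\,\tilde\omega(|x|))$ for all small $x$ at every scale. Here $\tilde\omega\to0$, since it is built from the tail sums $\sum_{j\ge k}\omega_\Omega(\tau^j)$. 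For $x=rl$ we get $u(rl)\ge c\,r(l_n-C\tilde\omega(r))\ge \tfrac c2 l_n r$ once $\tilde\omega(r)<l_n/(2C)$; this defines $\eta$, which depends on $l$.

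The main obstacle is Step 2. The error from the nonhomogeneous term must enter additively at order $\omega_f^{1/(p-1)}$, even though the equation is nonlinear and degenerate. Near the flat boundary $h$ is nondegenerate ($|Dh|\ge c_1/2$), so I would try to linearize there: on that region the difference $v-h$ satisfies a uniformly elliptic linear equation with bounded measurable coefficients, whose solutions lie in the Pucci class $\mathcal S$. The geometric error from the domain perturbation would then be controlled by an ABP/barrier argument.
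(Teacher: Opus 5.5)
\textbf{There is a genuine gap in Step 2.} The sentence ``the $\partial_n h(0)$ factor is absorbed by choosing the normalization'' is where the argument breaks.

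What your comparison actually gives on $B_\tau$ is $v\ge \partial_n h(0)\,y_n-C\tau^{1+\alpha}-C\epsilon_k$. Here the flat Hopf lemma only guarantees $\partial_n h(0)\ge c_1$ for a fixed $c_1<1$. Moreover, the $C^{1,\alpha}$ remainder $C|y|^{1+\alpha}$ carries a constant proportional to $\|h\|_{L^\infty}\sim 1$, not to $\omega_\Omega(\tau^k)$. After rescaling, each step can therefore multiply the slope by a factor as small as $c_1$. It also adds a relative error of order $\tau^{\alpha}$ to $\sigma_{k+1}$ that does not decay in $k$; your choice $C\tau^\alpha\le c_1/4$ makes it a fixed fraction. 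Iterating gives only geometric decay of $a_k$ (growth like $|x|^{1+\beta}$), and $\tilde\omega\not\to 0$.

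Nothing in your scheme shows $\partial_n h(0)\ge 1-C\omega_\Omega(\tau^k)$. The comparison $h\ge y_n-\sigma_k$ says nothing about the derivative at $0$ unless you have a $C^{1,\alpha}$ estimate for $h-y_n$ itself, with constants scaling like $\sigma_k$. So the real obstacle is preserving the slope, not the size of the $f$-term. Your $L^q$ comparison step and your final passage to $u(rl)\ge c\,l_n r$ are fine.

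\textbf{The missing idea is to linearize around the current affine profile.} This is what the paper does.
\begin{itemize}
\item The flat Hopf lemma (Lemma~\ref{hpflat}) is used only once, at $k=0$. It produces an initial slope $\tilde a=c_1$ with $u\ge \tilde a x_n-\eta^2$ on $\Omega^+_\eta$.
\item At each later step one compares $\hat u=u-(\tilde a-a_k)x_n$ on $\Omega\cap B_r^+$, with $r=\eta^{k+1}$, against the solution $v$ of the shifted equation $\mathrm{div}(|Dv+(\tilde a-a_k)e_n|^{p-2}(Dv+(\tilde a-a_k)e_n))=0$ in $B_r^+$. Its data are $v=0$ on $T_r$ and $v=-\eta^kA_k$ on $\partial B_r^+\setminus T_r$.
\item Lemma~\ref{l.vis.Pucci} puts $v$ in $\mathcal S(1,p-1,0)$ uniformly in the shift. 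Theorem~\ref{t.p-Lap.flat} then gives a slope correction $a_r\le C_1A_k/\eta$ and a remainder $C_1\eta^{1+\bar\alpha}\eta^kA_k$. Both are proportional to the small deviation $\eta^kA_k$, not to the size of $u$.
\item The boundary perturbation costs only $\tilde a\, r\omega(r)$, because $u\ge 0$ and $\partial\Omega\cap B_r^+\subset\{0<x_n\le r\omega(r)\}$. The right-hand side $f$ is handled by Lemma~\ref{ABPw}.
\item Since $\sum_k A_k\le 4\eta^2$, the corrections satisfy $a_k\le \tilde a/2$. The slope is thus preserved additively, $\tilde a-a_k\ge \tilde a/2$, rather than multiplicatively.
\end{itemize}
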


The following result is the \emph{boundary pointwise $C^1$ regularity}.

\begin{theorem}\label{t.C1}
Let $u$ be a viscosity solution of \eqref{e.p-lap}. Suppose that $\partial\Omega$ is $C^{1,\mathrm{Dini}}$ at $0$ and
\begin{equation*}
  f \in L^\infty(\Omega \cap B_1), \quad g \in C^{1,\mathrm{Dini}}(0).
\end{equation*}
Then $u \in C^{1 }(0)$, i.e., there exist an affine function $L$ and a modulus of continuity $\sigma$, depending only on $n,p,\omega_g$ and $\omega_{\Omega}$, such that
\begin{equation}\label{C1.e.u}
  |u(x) - L(x)| \leq C |x|\sigma(|x|) \left( \|u\|_{L^\infty(\Omega \cap B_1)} + \|f\|_{L^\infty(\Omega \cap B_1)}^{1/(p-1)} + \|g\|_{C^{1,\mathrm{Dini}}(0)} \right), \quad \forall x \in \Omega \cap B_{1/2},
\end{equation}
and
\begin{equation}\label{C1.e.Du0}
  |Du(0)|=|DL| \leq C \left( \|u\|_{L^\infty(\Omega \cap B_1)} + \|f\|^{1/(p-1)}_{L^\infty(\Omega \cap B_1)} + \|g\|_{C^{1,\mathrm{Dini}}(0)} \right),
\end{equation}
where $C$ depends only on $n, p$ and $\|\partial\Omega\|_{C^{1,\mathrm{Dini}}(0)}$.
\end{theorem}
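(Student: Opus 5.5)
We prove Theorem \ref{t.C1} by a compactness argument combined with an iteration on dyadic scales. The iteration approximates $u$ near $0$ by affine functions $L_k(x)=a_k+b_k\cdot x$ on $\Omega\cap B_{\rho^k}$ and measures the error in $L^\infty$ at scale $\rho^k$, using the equation \eqref{e.p.gene} with $\xi=b_k$.

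\textbf{Normalization.} First reduce to $g(0)=0$ and $\nabla g(0)=0$ by subtracting the affine part of $g$. Let $\ell(x)=g(0)+\nabla g(0)\cdot x$. Then $v=u-\ell$ solves $\mathrm{div}(|Dv+\xi|^{p-2}(Dv+\xi))=f$ with $\xi=\nabla g(0)$, and $v=g-\ell$ on the boundary, so $|v|\le |x|\omega_g(|x|)$ there. This is exactly why the generalized equation \eqref{e.p.gene} was introduced. By scaling $u\mapsto u/K$ with
\[
K=\|u\|_{L^\infty}+\|f\|_{L^\infty}^{1/(p-1)}+\|g\|_{C^{1,\mathrm{Dini}}(0)},
\]
which replaces $f$ by $f/K^{p-1}$, we may assume all data are bounded by $1$. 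By restricting to a small ball and rescaling, we may also assume $\|f\|_\infty$, $\omega_g(1)$ and $\omega_\Omega(1)$ are at most a small $\delta$.

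\textbf{Key lemma (one step of improvement).} There exist $\rho\in(0,1/4)$ and $\delta>0$, depending on $n$ and $p$, with the following property. Suppose $\|v\|_{L^\infty(\Omega_1)}\le 1$ and $|\xi|\le M$, where $v$ solves \eqref{e.p.gene} with $\|f\|_\infty\le\delta$, the boundary data satisfy $|v|\le\delta$ on $(\partial\Omega)_1$, and $\partial\Omega\cap B_1$ lies in $\{|x_n|\le\delta\}$. Then there is a linear function $\bar b\cdot x$, with $\bar b$ parallel to $e_n$ and $|\bar b|\le C_0$, such that
\[
\|v-\bar b\cdot x\|_{L^\infty(\Omega_\rho)}\le \rho^{1+\alpha}/2 .
\]
The proof is by contradiction and compactness. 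Take sequences $v_m,\xi_m,f_m,\Omega_m$ with $\delta_m\to0$. We need uniform equicontinuity up to the boundary. The interior part comes from standard $C^{1,\alpha}$/Hölder estimates for \eqref{e.p.gene}; these are uniform in $\xi$ because the operator is uniformly elliptic in nondivergence form when $|Dv+\xi|$ is large, and $p$-Laplacian-like otherwise. Boundary Hölder continuity comes from barriers, since the domains satisfy a uniform exterior cone condition as $\delta_m\to0$. Alternatively one can use that $v$ lies in Pucci classes away from the degenerate set, as the paper indicates.

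Extracting a subsequence, $\xi_m\to\xi_\infty$ and $v_m\to v_\infty$ uniformly, where $v_\infty$ solves $\mathrm{div}(|Dw+\xi_\infty|^{p-2}(Dw+\xi_\infty))=0$ in $B_1^+$ with $w=0$ on $T_1$. Stability of viscosity solutions gives the limit equation. The boundary $C^{1,\alpha}$ regularity on flat domains (Lieberman, or the Pucci class flat-boundary theory) gives $|w-Dw(0)\cdot x|\le C|x|^{1+\alpha}$, and $Dw(0)$ is parallel to $e_n$. Choosing $\rho$ with $C\rho^{1+\alpha}<\rho^{1+\alpha}/4$ after reducing $\alpha$ then contradicts the assumption for large $m$.

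\textbf{Iteration.} Apply the lemma to the rescaled functions
\[
v_k(y)=\rho^{-k}\bigl(v(\rho^k y)-b_k\cdot\rho^k y\bigr)/\epsilon_k .
\]
These satisfy an equation of the same type with $\xi$ replaced by $(\xi+b_k)/\epsilon_k$. The three smallness quantities at scale $r=\rho^k$ are $\|f\|_\infty r$, $\omega_g(r)$ and $\omega_\Omega(r)$, after comparing against the boundary displacement times $|b_k|$. The error amplitude $\epsilon_k$ is chosen as
\[
\epsilon_{k+1}=\max\Bigl(\rho^\alpha\epsilon_k,\ C\bigl(\omega_g(\rho^k)+\omega_\Omega(\rho^k)+(\rho^k)^{1/(p-1)}\bigr)\Bigr),
\]
and we set $\sigma(\rho^k)\sim\epsilon_k$. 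The Dini conditions give $\sum_k\epsilon_k<\infty$, so the corrections satisfy $|b_{k+1}-b_k|\le C\epsilon_k$ and are summable. Hence $b_k\to b$ with $|b|\le C$, which yields \eqref{C1.e.Du0}. We also get $|v-b\cdot x|\le C|x|\sigma(|x|)$ with $\sigma(r)=\sum_{j\ge k}\epsilon_j\to0$, which yields \eqref{C1.e.u}.

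\textbf{Main obstacle.} The hardest step is keeping the lemma's constants uniform when the vector $\xi$ in \eqref{e.p.gene} is unbounded. After rescaling by $\epsilon_k$, the drift $(\xi+b_k)/\epsilon_k$ may blow up, and the compactness limit must still be controlled. I would handle this by splitting into two cases:
\begin{itemize}[label={}]
\item If $|\xi|$ is large, the equation is uniformly elliptic with ellipticity constants $\lambda=\min(1,p-1)$ and $\Lambda=\max(1,p-1)$. Then $v$ lies in $\mathcal S(\lambda,\Lambda,C|f||\xi|^{2-p})$, and the limit is governed by the linear flat-boundary theory.
\item If $|\xi|\le M$, the compactness argument above applies directly.
\end{itemize}
Both cases give the same boundary $C^{1,\alpha}$ estimate on the flat half-ball, and both absorb the curved boundary as an $O(\omega_\Omega)$ perturbation of the flat one.
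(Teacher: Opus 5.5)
Your overall scheme is the paper's: subtract the affine part of $g$ to produce the drift $\xi$, prove a one-step improvement-of-flatness lemma by compactness, and iterate with $\epsilon_{k+1}=\max\{\rho^{\alpha}\epsilon_k,\ C(\omega_g(\rho^k)+\omega_\Omega(\rho^k)+\rho^{k/(p-1)})\}$. There is, however, a genuine gap at the point you yourself flag.

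\textbf{The gap.} Because the rescaled drift $(\xi+b_k)/\epsilon_k$ is unbounded, the one-step lemma must hold with $\delta,\rho$ independent of $\xi\in\mathbb{R}^n$. Your lemma is proved only for $|\xi|\le M$ (you extract $\xi_m\to\xi_\infty$), and the large-$|\xi|$ case does not work as stated:
\begin{itemize}
\item The claim $v\in\mathcal S(1,p-1,C|f||\xi|^{2-p})$ needs $|D\varphi+\xi|\gtrsim|\xi|$ at every touching point. A priori you only have $|Dv|\le C(1+|\xi|)$ (\Cref{le2.1}). So test functions with $D\varphi\approx-\xi$ are not excluded, and there the equation degenerates and $|f|\,|D\varphi+\xi|^{2-p}$ is unbounded.
\item For the same reason, the uniform-in-$\xi$ equicontinuity you call ``standard'' is not. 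The Imbert--Silvestre estimate \cite{MR3500837} needs the Pucci inequalities wherever $|Du|$ is large. That holds only for bounded $|\xi|$; for large $|\xi|$ the degenerate set $Du\approx-\xi$ sits at large gradients. Interior $C^{1,\alpha}$ estimates applied naively to $v+\xi\cdot x$ give bounds growing with $|\xi|$.
\item $\operatorname{osc}_{B_1}\partial\Omega\le\delta$ is only slab containment. It gives no uniform exterior cone condition, and the $C^{1,\mathrm{Dini}}$ condition at $0$ says nothing about other boundary points. So your boundary barrier step is unjustified.
\end{itemize}

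\textbf{How the paper closes these holes.}
\begin{itemize}
\item \Cref{l3.3} proves a $C^{\beta}$ bound independent of $\xi$. For $|\xi|\ge a_0$ it runs the Ishii--Lions doubling argument with exponent $\gamma=1/(4C_0)$, where $C_0|\xi|$ is the Lipschitz bound. This forces the jets to satisfy $|q_x|,|q_y|\le|\xi|/2$, so the equation is nondegenerate exactly where it is used. For $|\xi|\le a_0$ it invokes \cite{MR3500837}.
\item In \Cref{k1} no limit equation is identified, and $u_k$ is never placed in a Pucci class. For a smooth $\varphi$ touching the limit, $D\varphi(x_k)$ stays bounded. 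Either $|D\varphi(x_k)+\xi_k|$ stays away from zero, so $f_k|D\varphi+\xi_k|^{2-p}\to0$ (this covers $|\xi_k|\to\infty$ automatically), or it tends to zero and the perturbation trick of \Cref{l.vis.Pucci} applies. Hence the limit lies in $\mathcal S(1,p-1,0)$, and the flat Pucci theory gives constants independent of $\xi$.
\item The boundary information comes from \Cref{d1}. Compare, via \Cref{ABPw} (uniform in $\xi$ by monotonicity), with the solution on the shifted half-ball $B^+_{3/4}-\delta e_n$, which lies in $\mathcal S(1,p-1,0)$ by \Cref{l.vis.Pucci}. This gives $|u|\le C(x_n+2\delta)$, which is all that is needed for $u_0=0$ on $T_{1/2}$.
\end{itemize}

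\textbf{An alternative repair.} Your interior step could instead be fixed by applying $C^{1,\alpha}$ estimates to $(v+\xi\cdot x)/|\xi|$ and interpolating against $\|v\|_{L^\infty}\le1$. This yields $|Dv|\le C|\xi|^{1/(1+\alpha)}\le|\xi|/2$ on interior balls for $|\xi|$ large. Some such argument must be supplied.
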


The paper is organized as follows. In \Cref{sec:Pre}, we present some preliminaries. We show that viscosity solutions of the homogeneous $p$-Laplace equation belong to Pucci classes and establish a comparison estimate for solutions of different $p$-Laplace equations. In addition, we establish a compactness property for solutions of the $p$-Laplace equation. In \Cref{sec:Lip} and \Cref{sec:Hopf}, we prove the boundary pointwise Lipschitz regularity on exterior $C^{1,\mathrm{Dini}}$ domains (\Cref{t.Lip}) and the Hopf lemma on interior $C^{1,\mathrm{Dini}}$ domains (\Cref{t.Hopf}), respectively. Finally, \Cref{sec:C1} is devoted to the proof of the boundary pointwise $C^1$ regularity (\Cref{t.C1}).

\section{Preliminaries}\label{sec:Pre}
In this section, we first prove that viscosity solutions of the $p$-Laplace equation belong to suitable Pucci classes and establish an important comparison estimate for the subsequent iteration argument. Then we prove interior H\"{o}lder regularity and a uniform estimate up to the boundary.

\begin{lemma}\label{l.vis.Pucci}
Let $\xi\in \mathbb{R}^n$. Assume that $u$ is a viscosity solution of
\begin{equation}\label{e.vis.Pucci}
  \mathrm{div}\left(|Du+\xi|^{p-2}(Du+\xi) \right)=0 \quad  ~~\mbox{ in }~~  \Omega,
\end{equation}
then
\begin{equation*}
  u\in \mathcal S(1,p-1,0) \quad ~~\mbox{ in }~~ \Omega.
\end{equation*}
\end{lemma}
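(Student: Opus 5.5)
The plan is to expand the divergence for smooth test functions and check the Pucci inequalities pointwise, treating the degenerate set where $D\varphi+\xi=0$ separately. Set $v:=u+\xi\cdot x$, so that $Dv=Du+\xi$ and $D^2v=D^2u$. Then $u$ solves \eqref{e.vis.Pucci} exactly when $v$ is a viscosity solution of the homogeneous $p$-Laplace equation. Since the Pucci classes depend only on second derivatives, $u\in\mathcal S(1,p-1,0)$ if and only if $v$ is. It therefore suffices to treat $\xi=0$. Because $f=0$ is continuous, $L^n$-viscosity and $C$-viscosity notions coincide here, so I will test with $C^2$ functions.

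For the subsolution property, take $\varphi\in C^2$ with $u-\varphi$ having a local maximum at $x_0$. I must show $\mathcal M^+(D^2\varphi(x_0),1,p-1)\geq 0$. If $\eta:=D\varphi(x_0)\neq0$, expand
\begin{equation*}
\mathrm{div}(|D\varphi|^{p-2}D\varphi)=|D\varphi|^{p-2}\left(\delta_{ij}+(p-2)\frac{\varphi_i\varphi_j}{|D\varphi|^2}\right)\varphi_{ij}.
\end{equation*}
Definition \ref{V} gives $|\eta|^{p-2}\,\mathrm{tr}(A D^2\varphi(x_0))\geq0$ with $A=I+(p-2)\hat\eta\otimes\hat\eta$. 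The eigenvalues of $A$ are $1$ (multiplicity $n-1$) and $p-1$, and they lie in $[1,p-1]$ since $p\geq2$. Dividing by $|\eta|^{p-2}>0$ gives $\mathrm{tr}(AM)\geq0$, and $\mathcal M^+(M,1,p-1)=\sup_{1\le A\le p-1}\mathrm{tr}(AM)\geq \mathrm{tr}(AM)\ge 0$.

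The main obstacle is the case $D\varphi(x_0)=0$. There the equation evaluated at $\varphi$ is either $0\ge0$ for $p>2$, which carries no information, or meaningless for $p=2$. So the naive definition gives nothing, and I have to perturb the test function. I will use the standard argument of Juutinen–Lindqvist–Manfredi (cf.\ \cite{MR1871417}), with the aim of reaching a contradiction. Suppose $\mathcal M^+(D^2\varphi(x_0),1,p-1)<0$. Then $D^2\varphi(x_0)$ has a negative eigenvalue, and in fact $(p-1)\sum_{\mu_i>0}\mu_i+\sum_{\mu_i<0}\mu_i<0$.
\begin{enumerate}
\item Make the maximum strict by replacing $\varphi$ with $\varphi+|x-x_0|^4$; this leaves $D\varphi$ and $D^2\varphi$ at $x_0$ unchanged.
\item Consider $\varphi_\epsilon(x)=\varphi(x)+\epsilon e\cdot x$ for small vectors $e$. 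Then $u-\varphi_\epsilon$ attains a local maximum at points $x_\epsilon\to x_0$.
\item Show that $D\varphi_\epsilon(x_\epsilon)\neq0$ for a suitable choice of $e$.
\item At such points the nondegenerate computation above applies, giving $\mathcal M^+(D^2\varphi(x_\epsilon),1,p-1)\ge0$. Letting $\epsilon\to0$ and using continuity of $D^2\varphi$ and of $\mathcal M^+$ yields the contradiction.
\end{enumerate}
For step 3, I would argue as follows. If $D\varphi_\epsilon(x_\epsilon)=0$ for all small $e$, then $D\varphi(x_\epsilon)=-\epsilon e$. By the implicit function theorem near a nondegenerate Hessian one then gets local concavity information, which can be exploited separately. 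An alternative that avoids this is to use the equivalence with weak solutions (Remark \ref{vis_weak}) together with the interior $C^{1,\alpha}$ theory and an approximation by regularized equations $\mathrm{div}((|Du|^2+\delta)^{(p-2)/2}Du)=0$. The solutions $u_\delta$ of these are smooth, satisfy $(\delta_{ij}+(p-2)\frac{\partial_i u\,\partial_j u}{|Du|^2+\delta})\partial_{ij}u=0$ with eigenvalues in $[1,p-1]$, and hence lie in $\mathcal S(1,p-1,0)$ classically. One then passes to the limit $u_\delta\to u$ locally uniformly using stability of Pucci classes under uniform convergence. I would take this regularization route as the primary plan, since it handles the degenerate gradient cleanly.

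The supersolution property follows by the symmetric argument, or by applying the above to $-u$, which solves the same homogeneous equation with $\xi$ replaced by $-\xi$, and using $\mathcal M^-(M)=-\mathcal M^+(-M)$.
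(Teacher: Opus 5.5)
Your primary plan, the regularization route, is correct. It is a genuinely different route from the paper's.

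\textbf{Where the two proofs agree and where they split.} Both reduce to $\xi=0$ and handle $D\varphi(x_0)\neq0$ the same way: after dividing by $|D\varphi(x_0)|^{p-2}$ one is left with $\operatorname{tr}\bigl((I+(p-2)\hat q\otimes\hat q)D^2\varphi(x_0)\bigr)$, which lies between $\mathcal M^-$ and $\mathcal M^+$. The difference is the degenerate case.

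\textbf{The paper's degenerate case.} The paper never leaves Definition~\ref{V}. It reduces to a quadratic test function $\frac12Ax\cdot x$ and, for the supersolution property, assumes $\mathcal M^-(A,1,p-1)>0$. It then perturbs by $\varepsilon|P_Sx|$, where $S$ is spanned by the eigenvectors of $A$ with nonnegative eigenvalues. At the new touching point $x_0$ there are two possibilities:
\begin{itemize}
\item $P_Sx_0=0$: a linear perturbation $\varepsilon\theta\cdot P_Sx$ still touches there, with nonzero gradient and Hessian $A$.
\item $P_Sx_0\neq0$: then $D\psi(x_0)\neq0$, and $D^2\psi(x_0)\geq A$ by convexity of $|P_Sx|$.
\end{itemize}
Both contradict $\mathcal M^-(A,1,p-1)>0$. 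This uses only the definition and works for any operator $|Du+\xi|^{\gamma}G(Du,D^2u)$ with $\mathcal M^-\leq G(q,\cdot)\leq\mathcal M^+$, divergence form or not.

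\textbf{What your route costs.} You never meet the degenerate set. In exchange you import:
\begin{itemize}
\item the viscosity--weak equivalence of Remark~\ref{vis_weak};
\item solvability and interior smoothness of the regularized problems;
\item interior estimates uniform in $\delta$;
\item stability of Pucci inequalities under local uniform limits.
\end{itemize}

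\textbf{A step you should make explicit.} Nothing in your sketch makes the limit equal to $u$. Solve the regularized problem in a ball $B\Subset\Omega$ with $u_\delta-u\in W^{1,p}_0(B)$, and subtract the two weak formulations tested with $u_\delta-u$. The monotonicity inequality (uniform in $\delta$ for $p\geq2$), together with $(|Du|^2+\delta)^{(p-2)/2}Du\to|Du|^{p-2}Du$ in $L^{p/(p-1)}(B)$, gives $Du_\delta\to Du$ in $L^p(B)$. Only then is the locally uniform limit from Arzel\`a--Ascoli identified as $u$.

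\textbf{Your first sketch.} As written, it is not a proof. Step 3 appeals to the inverse function theorem, which needs $D^2\varphi(x_0)$ nonsingular, and it says nothing about the singular case. Also, ``local concavity information'' is not what comes out.

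It can be completed along your own lines once you reduce to a quadratic $\varphi=\frac12A(x-x_0)\cdot(x-x_0)$ with a strict maximum.
\begin{itemize}
\item If $A$ is singular, pick a small $w\notin\operatorname{Range}A$. Then $A(x_w-x_0)+w\neq0$ at every maximum point $x_w$ of $u-\varphi-w\cdot x$. Since $D^2\varphi\equiv A$, the nondegenerate computation gives $\mathcal M^+(A,1,p-1)\geq0$ at once.
\item If $A$ is nonsingular, either some maximum point is nondegenerate for some small $w$, and you conclude as before, or every $y$ near $x_0$ maximizes $u-\varphi+A(y-x_0)\cdot x$. In the second case $u(x)-u(y)\leq\frac12A(x-y)\cdot(x-y)$, so by symmetry $|u(x)-u(y)|\leq\frac12\|A\||x-y|^2$. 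Hence $u$ is locally constant, and the strict maximum forces $A>0$, contradicting $\mathcal M^+(A,1,p-1)<0$.
\end{itemize}
That would be an elementary alternative to the paper's $\varepsilon|P_Sx|$ device.
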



\begin{proof}
Inspired by the proof of \cite[Lemma 6]{MR2995669}, we argue as follows. We first reduce the problem to the case $\xi=0$ by introducing
\begin{equation*}
  v=u+\xi\cdot x
\end{equation*}
which satisfies
\begin{equation*}
  \mathrm{div}(|Dv|^{p-2} Dv)=0 \quad ~~\mbox{ in }~~\Omega.
\end{equation*}
If the desired result has been established for the case $\xi=0$, then $v\in \mathcal S(1,p-1,0)$. Since $D^2 v= D^2 u$, it follows immediately that $u\in \mathcal S(1,p-1,0)$.

Henceforth, we assume that $\xi=0$. We only prove the supersolution property, since the proof of the subsolution property is analogous. More precisely, we show that if
\begin{equation*}
  \mathrm{div}(|Du|^{p-2}Du)\leq 0,
\end{equation*}
then
\begin{equation*}
  \mathcal M^{-}(D^2 u, 1, p-1)\leq 0.
\end{equation*}

Let $\varphi$ be a test function that touches $u$ strictly from below at a point $x\in \Omega$. For simplicity, we assume that $x=0$ and
\begin{equation*}
  \varphi(0)=u(0)=0 \ \ ~~\mbox{ and }~~\ \ \varphi<u \ \ ~~\mbox{ in }~~\ B_r\backslash \{0\}
\end{equation*}
for some $r>0$. Without loss of generality, we may assume that $\varphi$ is a quadratic polynomial of the form
\begin{equation*}
  \varphi=\frac{1}{2}Ax\cdot x+b\cdot x,
\end{equation*}
so that
\begin{equation*}
  D^2 \varphi =A\ \ ~~\mbox{ and }~~\ \ D \varphi (0)=b.
\end{equation*}
We shall prove that
\begin{equation*}
  \mathcal M^{-}(A, 1, p-1)\leq 0
\end{equation*}
by considering separately the cases $b\neq 0$ and $b=0$.

\textbf{Case (i) $b\neq 0$.}
By the definition of viscosity supersolutions, we have
\begin{equation*}
  \mathrm{div}(|D\varphi|^{p-2}D\varphi)
:=|D\varphi|^{p-2}\Delta\varphi
+(p-2)|D\varphi|^{p-4}D_i\varphi D_j\varphi D_{ij}\varphi
\leq 0
\end{equation*}
at the touching point. Therefore,
\begin{equation*}
  |b|^{p-2}\operatorname{tr} A +(p-2)|b|^{p-4}b_i  b_j A_{ij}
\leq 0.
\end{equation*}
Since $b\neq 0$, dividing both sides by $|b|^{p-2}$ yields
\begin{equation*}
  \operatorname{tr} A+(p-2)\frac{b_i  b_j}{|b|^2} A_{ij} \leq 0.
\end{equation*}
Observe that
\begin{equation*}
  \mathcal M^{-}(A, 1, p-1)\leq \operatorname{tr} A+(p-2)\frac{b_i  b_j}{|b|^2} A_{ij}.
\end{equation*}
Hence,
\begin{equation*}
  \mathcal M^{-}(A, 1, p-1)\leq 0.
\end{equation*}

\textbf{Case (ii) $b=0$.}
We argue by contradiction. Suppose that $\mathcal M^{-}(A, 1, p-1)> 0$.
Then $A$ has at least one positive eigenvalue. Let $S$ denote the direct sum of eigenspaces corresponding to the non-negative eigenvalues of $A$, and let $P_S$ be the orthogonal projection onto $S$. Define
\begin{equation*}
  \psi(x)=\varphi(x)+\varepsilon|P_S x|.
\end{equation*}
Since $\varphi<u$ in $B_r\backslash\{0\}$, it follows that, for sufficiently small $\varepsilon>0$, the function $u-\psi$ attains its minimum at some point $x_0\in B_r$.

We first claim that
\begin{equation*}
  P_S x_0\neq 0.
\end{equation*}
Indeed, suppose by contradiction that $P_S x_0= 0$. For any unit vector $\theta \in \mathbb{R}^n$, define
\begin{equation*}
  \psi_{\theta}(x)=\varphi(x)+\varepsilon \theta \cdot P_S x.
\end{equation*}
Since
\begin{equation*}
  |P_S x|=\max_{|\theta|=1}\theta \cdot P_S x,
\end{equation*}
we have
\begin{equation*}
  \psi_{\theta}(x)\leq \psi(x)\ \ ~~\mbox{ in }~~\ B_r.
\end{equation*}
Hence,
\begin{equation*}
  (u-\psi_{\theta})(x_0)=(u-\psi)(x_0)\leq (u-\psi)(x)\leq (u-\psi_{\theta})(x)\ \ ~~\mbox{ for all }~~\ x\in B_r,
\end{equation*}
which shows that $u-\psi_{\theta}$ also reaches its minimum at $x_0$.
Choose $\theta$ so that
\begin{equation*}
  D \psi_{\theta} (x_0)= Ax_0 +\varepsilon P_S \theta \neq 0.
\end{equation*}
By the definition of viscosity supersolutions,
\begin{equation*}
  \mathrm{div}(|D\psi_{\theta}|^{p-2}D\psi_{\theta})
=|D\psi_{\theta}|^{p-2}\Delta\psi_{\theta}
+(p-2)|D\psi_{\theta}|^{p-4}D_i\psi_{\theta} D_j\psi_{\theta} D_{ij}\psi_{\theta}
\leq 0
\end{equation*}
at $x_0$. Dividing both sides by $|D\psi_{\theta}(x_0)|^{p-2}$, we obtain
\begin{equation*}
  \Delta \psi_{\theta}(x_0)
+(p-2)\frac{D_i\psi_{\theta}(x_0) D_j\psi_{\theta}(x_0)}{|D\psi_{\theta}(x_0)|^2} D_{ij}\psi_{\theta}(x_0)
\leq 0.
\end{equation*}
Since
\begin{equation*}
  \mathcal M^{-}(D^2\psi_{\theta}(x_0), 1, p-1) \leq
\Delta \psi_{\theta}(x_0) +(p-2)\frac{D_i\psi_{\theta}(x_0) D_j\psi_{\theta}(x_0)}{|D\psi_{\theta}(x_0)|^2} D_{ij}\psi_{\theta}(x_0),
\end{equation*}
it follows that
\begin{equation*}
  \mathcal M^{-}(A, 1, p-1)=\mathcal M^{-}(D^2\psi_{\theta}(x_0), 1, p-1)\leq 0,
\end{equation*}
contradicting our assumption. Therefore,
\begin{equation*}
P_S x_0 \neq 0.
\end{equation*}

Next, we show that
\begin{equation*}
  D\psi(x_0)=Ax_0+\varepsilon \theta_0\neq 0,
\end{equation*}
where
\begin{equation*}
  \theta_0=\frac{P_S x_0}{|P_S x_0 |}.
\end{equation*}
Indeed, since $P_S x_0 \neq0$,
\begin{equation*}
  (Ax_0+\varepsilon \theta_0)\cdot P_S x_0= P_S Ax_0\cdot x_0+\varepsilon|P_S x_0|
\geq \varepsilon|P_S x_0|>0.
\end{equation*}
Hence,
\begin{equation*}
  D\psi(x_0)=Ax_0+\varepsilon \theta_0\neq 0.
\end{equation*}
Since $u-\psi$ attains its minimum at $x_0$, the viscosity inequality gives
\begin{equation*}
  |D\psi(x_0)|^{p-2}\Delta\psi(x_0)
+(p-2)|D\psi(x_0)|^{p-4}D_i\psi(x_0) D_j\psi(x_0)  D_{ij}\psi(x_0)
\leq 0.
\end{equation*}
Dividing both sides by $|D\psi(x_0)|^{p-2}$, we have
\begin{equation*}
  \Delta \psi (x_0)
+(p-2)\frac{D_i\psi(x_0) D_j\psi (x_0)}{|D\psi(x_0)|^2} D_{ij}\psi (x_0)
\leq 0.
\end{equation*}
Finally, note that the function $x\mapsto |P_Sx|$ is convex. Hence,
\begin{equation*}
  \mathcal M^{-}(A, 1, p-1)
\leq \mathcal M^{-}(D^2 \psi(x_0), 1, p-1)\leq 0,
\end{equation*}
which again contradicts our assumption.
\end{proof}

As a consequence of \Cref{l.vis.Pucci} and boundary $C^{1,\alpha}$ regularity for fully nonlinear equations (see \cite[Lemma 3.1]{MR3246039}), we have the following boundary $C^{1,\alpha}$ regularity for the $p$-Laplace equation in flat domains. 

\begin{theorem}\label{t.p-Lap.flat}
Let $\xi\in \mathbb{R}^n$. Assume that $u$ is a viscosity solution of
\begin{equation}
	\begin{cases}
    \mathrm{div} \left(|Du+\xi|^{p-2}(Du+\xi) \right)= 0   &~~\ \mbox{ in }~~\   B_r^+,\\
	u=0   &~~\ \mbox{ on }~~ \ T_r.
	\end{cases}
\end{equation}
Then there exists a constant $0<\bar \alpha<1$ depending on $n$ and $p$ such that $u\in C^{1,\bar\alpha}(0)$, i.e., there exists a constant $a_r$ such that
\begin{equation}\label{alpha1}
  |u(x)- a_rx_n|\leq   C_1\frac{|x|^{1+\bar \alpha}}{r^{1+\bar\alpha}}\|u\|_{L^\infty(B_r^+)},\ \ \forall x\in B_{r/2}^+
\end{equation}
and
\begin{equation}\label{alpha2}
  |Du(0)|=|a_r| \leq \frac{C_1}{r}\|u\|_{L^\infty(B_r^+)},
\end{equation}
where $C_1$ depends on $n$ and $p$.
\end{theorem}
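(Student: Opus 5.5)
The plan is to combine \Cref{l.vis.Pucci} with the known boundary $C^{1,\alpha}$ estimate for the Pucci class on flat domains (\cite[Lemma 3.1]{MR3246039}), and then fix the constants by scaling. First, by \Cref{l.vis.Pucci}, $u\in\mathcal S(1,p-1,0)$ in $B_r^+$, with ellipticity constants $\lambda=1$, $\Lambda=p-1$ depending only on $p$. The boundary condition $u=0$ on $T_r$ is continuous up to the flat boundary, since viscosity solutions are continuous up to $T_r$ by assumption in the boundary problem. Hence $u$ is a viscosity solution in the Pucci class with zero right-hand side and zero boundary data on a flat piece of boundary. This is exactly the setting of the cited boundary regularity result.

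Second, I reduce to $r=1$ by scaling. Set $v(y)=u(ry)/\|u\|_{L^\infty(B_r^+)}$ for $y\in B_1^+$; if $\|u\|_{L^\infty}=0$ there is nothing to prove. The Pucci classes with zero right-hand side are invariant under dilations $y\mapsto ry$ and under multiplication by positive constants. In fact, $D^2v(y)=r^2D^2u(ry)/\|u\|$, and $\mathcal M^\pm$ are positively homogeneous. So $v\in\mathcal S(1,p-1,0)$ in $B_1^+$, with $v=0$ on $T_1$ and $\|v\|_{L^\infty(B_1^+)}\le 1$.

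The cited Lemma then gives $\bar\alpha=\bar\alpha(n,p)\in(0,1)$, $C_1=C_1(n,p)$ and a number $a$ with $|a|\le C_1$ and $|v(y)-ay_n|\le C_1|y|^{1+\bar\alpha}$ for $y\in B_{1/2}^+$. The linear part is only in the $x_n$ direction because $v$ vanishes on $T_1$, which forces the tangential derivatives at $0$ to be zero. Scaling back with $x=ry$ and $a_r=a\|u\|_{L^\infty(B_r^+)}/r$ yields \eqref{alpha1} and \eqref{alpha2}. The identity $|Du(0)|=|a_r|$ follows because the estimate shows $u$ is differentiable at $0$ with gradient $a_re_n$.

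The only point that needs care is that \Cref{l.vis.Pucci} was proved with the usual $C^2$-viscosity notion, while the Pucci classes were defined via $L^n$-viscosity solutions. With zero right-hand side the two notions coincide for continuous functions (Caffarelli–Crandall–Kocan–Świech), so membership in $\mathcal S(1,p-1,0)$ holds in the sense required by \cite{MR3246039}. I expect this compatibility check to be the main obstacle, though a minor one. The rest consists of invoking the cited estimate and bookkeeping the scaling, and all constants depend only on $n$ and $p$ through $\lambda=1$ and $\Lambda=p-1$.
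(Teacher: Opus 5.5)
Your proposal is correct and follows the paper's route: the paper states this theorem as an immediate consequence of \Cref{l.vis.Pucci} (giving $u\in\mathcal S(1,p-1,0)$) combined with the boundary $C^{1,\alpha}$ estimate for the Pucci class on flat domains \cite[Lemma 3.1]{MR3246039}. Your scaling to $r=1$ and your remark that $C$-viscosity and $L^n$-viscosity solutions coincide here only make explicit what the paper leaves implicit.
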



The following lemma provides a comparison principle for the difference of two solutions to the $p$-Laplace equation. The proof is inspired by \cite[Lemma 2.2]{MR5035126}.

\begin{lemma}\label{ABPw}
Let $\xi \in \mathbb{R}^n$. Suppose that $u$ and $v$ are weak solutions of
\begin{equation*}
  \begin{cases}
	\mathrm{div}\left(|Du + \xi|^{p-2}(Du + \xi)\right) \geq f_1 &\ \mbox{in} \ \ \Omega, \\
		u \leq g_1 &\ \mbox{on}\ \ \partial\Omega,
	\end{cases}
\end{equation*}
and
\begin{equation*}
  \begin{cases}
		\mathrm{div}\left(|Dv + \xi|^{p-2}(Dv + \xi)\right) \leq f_2 &\ \mbox{in}\ \ \Omega, \\
		v \geq g_2 &\ \mbox{on}\ \ \partial\Omega,
	\end{cases}
\end{equation*}
respectively, where $f_1, f_2 \in L^q(\Omega)$ with $q > n/p$ and $1/p + 1/q \leq 1$. Then
\begin{equation}\label{abp}
	\sup_{\Omega}(u - v) \leq \|(g_1 - g_2)^+\|_{L^\infty(\partial\Omega)}
	+ C |\Omega|^{\frac{p}{n(p-1)} - \frac{1}{q(p-1)}} \|(f_1 - f_2)^-\|_{L^q(\Omega)}^{\frac{1}{p-1}},
\end{equation}
where $C$ depends only on $n$, $p$ and $q$.
\end{lemma}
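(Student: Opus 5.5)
The plan is to test the weak formulations with $\psi=(u-v-k)^+$, where $k=\|(g_1-g_2)^+\|_{L^\infty(\partial\Omega)}$ (more generally with $k\ge$ this value), and then run a De Giorgi / Stampacchia-type level-set iteration using the strong monotonicity of the $p$-Laplacian, which holds for $p\ge2$. Write $A(z)=|z|^{p-2}z$, $w=u-v$, and $A_k=\{w>k\}$. Since $w\le k$ on $\partial\Omega$, the function $(w-k)^+$ lies in $W^{1,p}_0(\Omega)$, so after a density argument it is an admissible nonnegative test function in both inequalities. Subtracting the two weak inequalities gives
\begin{equation*}
\int_{A_k}\bigl(A(Du+\xi)-A(Dv+\xi)\bigr)\cdot(Du-Dv)\,dx\le\int_{\Omega}(f_2-f_1)(w-k)^+\,dx\le\int_{\Omega}(f_1-f_2)^-(w-k)^+\,dx .
\end{equation*}
Because $p\ge 2$, the monotonicity inequality $(A(a)-A(b))\cdot(a-b)\ge c_p|a-b|^p$ bounds the left-hand side below by $c_p\int|D(w-k)^+|^p$. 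The shift by $\xi$ drops out here, since only the difference $Du-Dv$ appears.

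Next, set $h=(f_1-f_2)^-$ and $\phi=(w-k)^+$. By H\"older's inequality,
\begin{equation*}
\int h\phi\le\|h\|_{L^q}\,\|\phi\|_{L^{p^*}}\,|A_k|^{1-\frac1q-\frac1{p^*}}
\end{equation*}
when $p<n$; for $p\ge n$ I would use Sobolev with a large finite exponent, and the condition $1/p+1/q\le1$ is what keeps the exponents admissible. Applying Sobolev to $\phi\in W_0^{1,p}$ on the left gives
\begin{equation*}
\|\phi\|_{L^{p^*}}^{p-1}\le C\|h\|_{L^q}|A_k|^{1-\frac1q-\frac1{p^*}} .
\end{equation*}
For $m>k$ we have $(m-k)|A_m|^{1/p^*}\le\|\phi\|_{L^{p^*}}$, which yields
\begin{equation*}
|A_m|\le \left(\frac{C\|h\|_q^{1/(p-1)}}{m-k}\right)^{p^*}|A_k|^{\beta},\qquad \beta=\frac{p^*}{p-1}\Bigl(1-\frac1q-\frac1{p^*}\Bigr).
\end{equation*}
One checks that $\beta>1$ exactly when $q>n/p$.

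Stampacchia's lemma then gives $|A_{k+d}|=0$ with $d=C\|h\|_q^{1/(p-1)}|A_k|^{(\beta-1)/p^*}\le C\|h\|_q^{1/(p-1)}|\Omega|^{(\beta-1)/p^*}$. This yields exactly $\sup_\Omega w\le k+d$. The remaining step is to verify that $(\beta-1)/p^*$ equals $\frac{p}{n(p-1)}-\frac{1}{q(p-1)}$, which should follow by direct computation, using $1/p^*=1/p-1/n$; this must be checked, and scaling confirms it.

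I expect the main obstacle to be technical rather than conceptual: justifying $(w-k)^+$ as a test function when the boundary inequality is understood only in the trace/$W^{1,p}$ sense, and handling the borderline case $p\ge n$, where $p^*$ is replaced by an arbitrary large exponent and the constants and exponents need care. As an alternative for the exponent bookkeeping, one can first rescale $\Omega$ to unit measure, which is convenient because \eqref{abp} is scale-invariant.
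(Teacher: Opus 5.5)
Your proposal is correct and is essentially the paper's argument: you test with the truncation $(u-v-\|(g_1-g_2)^+\|_{L^\infty(\partial\Omega)}-k)^+$, use strong monotonicity of $|z|^{p-2}z$ for $p\ge 2$, and run a Stampacchia level-set iteration whose exponent satisfies $\beta>1$ exactly when $q>n/p$. The only difference is technical. The paper uses the measure-weighted Poincar\'e inequality $|A(k)|^{-p/n}\int_{A(k)}|w|^p\le C\int_{A(k)}|Dw|^p$ and works in $L^p$ with iteration exponent $\gamma=p$; this avoids a case split at $p=n$, and there $1/p+1/q\le 1$ is exactly the condition its H\"older step needs. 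You instead use the Sobolev exponent $p^*$, which forces the separate (but workable) treatment of $p\ge n$ and yields the same final power of $|\Omega|$.
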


\begin{proof}
For any $r \geq 0$, set
\begin{equation*}
  A(r) = \left\{ x \in \Omega : u(x) - v(x) - \|(g_1 - g_2)^+\|_{L^\infty(\partial\Omega)} \ge r \right\}.
\end{equation*}
Given $k \geq 0$, define
\begin{equation*}
  w(x) =
	\begin{cases}
		u(x) - v(x) - \|(g_1 - g_2)^+\|_{L^\infty(\partial\Omega)} - k, & x \in A(k), \\
		0, & x \in \Omega \setminus A(k).
	\end{cases}
\end{equation*}
By the boundary conditions,
\begin{equation*}
  u - v \leq g_1 - g_2 \leq (g_1 - g_2)^+~~\mbox{ on }~~\partial\Omega.
\end{equation*}
Hence,
\begin{equation*}
  w \geq 0~~\mbox{ in }~~\Omega, \quad   w = 0~~\mbox{ on }~~\partial\Omega.
\end{equation*}
By the standard monotonicity property of the $p$-Laplace operator,
\begin{equation*}
\begin{aligned}
  &\left(|Du + \xi|^{p-2}(Du + \xi) - |Dv + \xi|^{p-2}(Dv + \xi) \right) \cdot (Du - Dv)\\
  =&|Du + \xi|^p-|Du + \xi|^{p-2}(Du + \xi)\cdot (Dv + \xi)
  -|Dv + \xi|^{p-2}(Du + \xi)\cdot (Dv + \xi)+|Dv + \xi|^{p}\\
  =&\frac{1}{2}(|Du + \xi|^{p-2}+|Dv + \xi|^{p-2})|Du-Dv|^2
  +\frac{1}{2}(|Du + \xi|^{p-2}-|Dv + \xi|^{p-2})((|Du + \xi|^{2}-|Dv + \xi|^{2}))\\
  \geq& \frac{1}{2}(|Du + \xi|^{p-2}+|Dv + \xi|^{p-2})|Du-Dv|^2\\
  \geq& C |Du - Dv|^p,
\end{aligned}
\end{equation*}
where $C > 0$ depends only on $p$. Combining this estimate with the Poincar\'{e} inequality, we obtain
\begin{equation*}
  \begin{aligned}
		|A(k)|^{-p/n} \int_{A(k)} |w|^p
		& \leq C \int_{A(k)} |Dw|^p  = C \int_{A(k)} |Du - Dv|^p \\
		& \leq C \int_{A(k)} \left( |Du+\xi|^{p-2}(Du+\xi) - |Dv+\xi|^{p-2}(Dv+\xi)\right) \cdot (Du - Dv) dx \\
		&\leq C \int_{A(k)} (f_2 - f_1) w dx \\
		&\leq C \left( \int_{A(k)} w^p \right)^{1/p} \left( \int_{A(k)} |(f_1 - f_2)^-|^{p/(p-1)} \right)^{(p-1)/p}.
	\end{aligned}
\end{equation*}
Therefore,
\begin{equation*}
  \begin{aligned}
		\|w\|_{L^p(A(k))}^{p-1}
		& \leq C |A(k)|^{p/n} \|(f_1 - f_2)^-\|_{L^{p/(p-1)}(A(k))} \\
		& \leq C |A(k)|^{p/n + (p-1)/p - 1/q} \|(f_1 - f_2)^-\|_{L^q(A(k))}.
	\end{aligned}
\end{equation*}
For any $h \geq k$,
\begin{equation*}
  \begin{aligned}
		|A(h)|^{1/p} (h - k)
		& = \|h - k\|_{L^p(A(h))}
		\leq \|w\|_{L^p(A(h))}
		\leq \|w\|_{L^p(A(k))} \\
		& \leq C |A(k)|^{p/(n(p-1)) + 1/p - 1/(q(p-1))} \|(f_1 - f_2)^-\|_{L^q(A(k))}^{1/(p-1)}.
	\end{aligned}
\end{equation*}
Hence,
\begin{equation*}
  |A(h)| \leq \frac{C \|(f_1 - f_2)^-\|_{L^q(A(k))}^{p/(p-1)}}{(h - k)^p} |A(k)|^{p^2/(n(p-1)) + 1 - p/(q(p-1))}.
\end{equation*}

To conclude the proof, we apply the iteration lemma in \cite[Lemma 2.1]{MR5035126} with
\begin{equation*}
  \phi(r) = |A(r)|,\ \
  \beta = 1 + \frac{p^2}{n(p-1)} - \frac{p}{q(p-1)} > 1, \ \
  \gamma = p.
\end{equation*}
Since $\phi(0) \leq |\Omega|$, we obtain
\begin{equation*}
  \phi(r) = 0, \quad
  r = C 2^{\beta/(\beta-1)} \|(f_1 - f_2)^-\|_{L^q(\Omega)}^{1/(p-1)} |\Omega|^{p/(n(p-1)) - 1/(q(p-1))}.
\end{equation*}
Therefore, \eqref{abp} follows.
\end{proof}

We next present several preliminary results needed for the compactness argument. First, we need the following rough Lipschitz estimate.
\begin{lemma}\label{le2.1}
Let $\xi\in \mathbb{R}^n$ and $u$ be a viscosity solution of
\begin{equation}\label{e2.7}
	\mathrm{div}\left(|Du+\xi|^{p-2}(Du+\xi) \right)=f \ \ \mbox{in}\ B_1,
\end{equation}
satisfying
\begin{equation*}
  \operatorname{osc}_{B_1} u\leq 1,\quad \|f\|_{L^\infty(B_1)}\leq 1.
\end{equation*}
Then
\begin{equation}\label{e2.8}
  [u]_{C^{0,1}(\overline B_{1/2})}\leq C(1+|\xi|),
\end{equation}
where $C>0$ depends only on $n,p$.
\end{lemma}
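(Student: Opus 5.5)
We reduce to the case $\xi=0$ and then apply the known interior Lipschitz (indeed $C^{1,\alpha}$) estimates for the inhomogeneous $p$-Laplacian. Set $v=u+\xi\cdot x$. Then $v$ is a viscosity (equivalently, by \Cref{vis_weak}, weak) solution of $\mathrm{div}(|Dv|^{p-2}Dv)=f$ in $B_1$, and
\begin{equation*}
\operatorname{osc}_{B_1} v\leq \operatorname{osc}_{B_1}u+2|\xi|\leq 1+2|\xi|.
\end{equation*}
Since $[u]_{C^{0,1}(\overline B_{1/2})}\leq [v]_{C^{0,1}(\overline B_{1/2})}+|\xi|$, it suffices to show
\begin{equation*}
[v]_{C^{0,1}(\overline B_{1/2})}\leq C(1+|\xi|).
\end{equation*}

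\textbf{Step 1: normalization.} Put $M=1+2|\xi|\geq 1$ and $w=(v-v(0))/M$. Then $\operatorname{osc}_{B_1}w\leq 1$, hence $\|w\|_{L^\infty(B_1)}\leq 1$. Moreover, by the $(p-1)$-homogeneity of the operator, $w$ solves $\mathrm{div}(|Dw|^{p-2}Dw)=f/M^{p-1}$. Since $p\geq 2$ and $M\geq 1$, we have $\|f/M^{p-1}\|_{L^\infty(B_1)}\leq 1$.

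\textbf{Step 2: interior estimate.} For the equation $\mathrm{div}(|Dw|^{p-2}Dw)=\tilde f$ with $\|w\|_{L^\infty(B_1)}\leq 1$ and $\|\tilde f\|_{L^\infty(B_1)}\leq 1$, the classical interior gradient bound gives $\|Dw\|_{L^\infty(B_{1/2})}\leq C(n,p)$. This is DiBenedetto and Tolksdorf's $C^{1,\alpha}$ estimate, cited in the introduction, applied to weak solutions with bounded right-hand side. Scaling back yields
\begin{equation*}
[v]_{C^{0,1}(\overline B_{1/2})}\leq CM\leq C(1+|\xi|),
\end{equation*}
and therefore $[u]_{C^{0,1}(\overline B_{1/2})}\leq C(1+|\xi|)+|\xi|$, which is \eqref{e2.8}.

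\textbf{Main obstacle.} The one delicate point is to make sure the dependence on $\xi$ is linear. The homogeneity scaling in Step 1 handles this, and it is exactly here that $p\geq 2$ is used, to absorb the factor $M^{1-p}\leq 1$ in front of $f$. Everything else is a citation of standard interior regularity.

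Alternatively, to stay within the paper's own tools, one can derive the gradient bound differently. First obtain a Hölder/Lipschitz bound for $w$ via Bernstein's method, or use \Cref{ABPw} comparing $w$ on balls $B_r(x_0)$ with the $p$-harmonic replacement. The replacement lies in $\mathcal S(1,p-1,0)$ by \Cref{l.vis.Pucci}, so it enjoys interior $C^{1,\bar\alpha}$ bounds. The error from \Cref{ABPw} is of order $r^{p/(p-1)}=r^{1+1/(p-1)}$, which is summable in a Campanato-type iteration and produces a $C^{1,\alpha}$ hence Lipschitz bound. I would use the citation route and mention this iteration only as a backup.
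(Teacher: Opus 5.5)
Your proof is correct and follows the paper's argument: shift by $\xi\cdot x$ (and a constant) to reduce to the plain $p$-Laplacian, then apply the DiBenedetto--Tolksdorf interior Lipschitz bound. Your explicit division by $M=1+2|\xi|$ does the same job as the paper's direct use of the scale-invariant form $C(\|v\|_{L^\infty(B_1)}+\|f\|_{L^\infty(B_1)}^{1/(p-1)})$. One small correction to your remark: $M^{1-p}\le 1$ only needs $p>1$ and $M\ge 1$, so that step does not actually use $p\ge 2$.
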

\begin{proof}
Let $v=u-u(0)+\xi\cdot x$. Then $v$ is a solution of the classical $p$-Laplace equation:
\begin{equation*}
	\mathrm{div}\left(|Dv|^{p-2}Dv\right)=f \ \ \mbox{in}\ B_1.
\end{equation*}
By the interior Lipschitz regularity for $v$ (see \cite[Theorem 1]{MR709038}, \cite[Theorem 1]{MR727034}, \cite[Theorem 1.1]{MR2738325}, \cite[Theorem 1.1]{MR3800106}),
\begin{equation*}
[v]_{C^{0,1}(\overline B_{1/2})}\leq C\left(\|v\|_{L^{\infty}(B_1)}+\|f\|^{\frac{1}{p-1}}_{L^{\infty}(B_1)}\right)
\leq C\left(1+|\xi|\right),
\end{equation*}
where $C>0$ depends only on $n,p$. By transforming to $u$, we obtain \cref{e2.8}.
\end{proof}

Next, we establish the following interior $C^{\beta}$ estimates. Here both the H\"{o}lder exponent and the H\"{o}lder norm are uniform with respect to $\xi$.
\begin{lemma}\label{l3.3}
Let $u$ be a viscosity solution of
\begin{equation}\label{e}
	\mathrm{div}\left(|Du+\xi|^{p-2}(Du+\xi) \right)=f \ \ \mbox{in}\ B_1,
\end{equation}
satisfying
\begin{equation*}
  \operatorname{osc}_{B_1} u\leq 1,\quad \|f\|_{L^\infty(B_1)}\leq 1,
\end{equation*}

Then there exists a constant $0<\beta<1$, depending only on $n,p$, such that for every $0<r<1$,
\begin{equation}\label{e2.12}
  [u]_{C^\beta(\overline{B}_{r})}\leq C,
\end{equation}
where $C$ depends only on $n,p$ and $r$.
\end{lemma}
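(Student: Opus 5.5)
The plan is to prove the estimate uniformly in $\xi$ by splitting into a large-$|\xi|$ regime and a bounded-$|\xi|$ regime. The two regimes use different tools, but in each the Hölder exponent is controlled by $n,p$ alone.

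\textbf{Bounded $\xi$.} Fix a threshold $M\geq 1$, depending only on $n,p$, to be chosen in the other case. If $|\xi|\leq M$, \Cref{le2.1} applied on rescaled balls gives a Lipschitz bound. Precisely, for $x_0\in B_r$ we apply it on $B_{1-r}(x_0)$ after the scaling $u_\rho(y)=u(x_0+\rho y)/\rho$. The scaling $u\mapsto u(\rho\,\cdot)/\rho$ keeps $\xi$ fixed but multiplies the oscillation by $\rho^{-1}$, so we first divide by a normalizing constant. This yields $[u]_{C^{0,1}(\overline B_r)}\leq C(n,p,r)(1+M)$, and in particular a $C^\beta$ bound for any $\beta\leq 1$.

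\textbf{Large $\xi$.} If $|\xi|> M$, the equation is uniformly elliptic wherever $|Du|\ll|\xi|$, which is the key point. Write the equation in nondivergence form,
\[
|Du+\xi|^{p-2}\Big(\Delta u+(p-2)\tfrac{(Du+\xi)\otimes(Du+\xi)}{|Du+\xi|^2}:D^2u\Big)=f .
\]
By \Cref{le2.1}, $|Du|\leq C_0(1+|\xi|)$ on $B_{1/2}$, but this is not small relative to $|\xi|$. So instead I would argue directly in viscosity form. At any test point where the gradient $D\varphi$ satisfies $|D\varphi+\xi|\geq 1$, the viscosity inequality divided by $|D\varphi+\xi|^{p-2}$ gives $\mathcal M^-(D^2\varphi,1,p-1)\leq |f|\leq 1$, and symmetrically for $\mathcal M^+$. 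At test points with $|D\varphi+\xi|<1$, the gradient $D\varphi$ is close to $-\xi$, hence large ($|D\varphi|\geq M-1$).

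This places $u$ in the class of viscosity solutions to $\mathcal M^\pm$ inequalities that hold only where the gradient of the test function is not in a fixed ball, here $B_1(-\xi)$. It matches the "equations that hold only where the gradient is large" framework of Imbert–Silvestre, with the excluded region translated. For that framework, Krylov–Safonov type Hölder estimates (via the ABP-type measure estimate and the $L^\varepsilon$ lemma) give $[u]_{C^\beta(\overline B_r)}\leq C(n,p,r)$. The constants depend on the size of the excluded gradient set, which is $1$ here, not on $\xi$; the translation of the ball does not matter because the estimates only use sliding paraboloids/cones of small slope.

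\textbf{Main obstacle.} The hard step is this last one: verifying that the Imbert–Silvestre Hölder estimate tolerates an excluded gradient ball centered at an arbitrary point $-\xi$, uniformly in $\xi$. If this fails, the fallback is to use \Cref{l.vis.Pucci}-type arguments. Perturbing $f$ by the ABP comparison of \Cref{ABPw} against the homogeneous solution $h$ with the same boundary data, $h$ lies in $\mathcal S(1,p-1,0)$ and is hence uniformly $C^\beta$. The difference $u-h$ is controlled by $C\|f\|^{1/(p-1)}$ on each ball. Iterating this on dyadic balls then gives a Campanato-type $C^\beta$ bound with $\beta$ depending only on $n,p$, and this route needs no case split on $\xi$ at all.
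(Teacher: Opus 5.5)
Your bounded-$\xi$ case is fine. It is in fact simpler than the paper's treatment of $|\xi|\le a_0$, which invokes Imbert--Silvestre even though \Cref{le2.1} already gives a Lipschitz bound there: apply it to $u(x_0+\rho y)$, which solves the same type of equation with $\xi,f$ replaced by $\rho\xi,\rho^p f$.

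\textbf{The large-$\xi$ case has a genuine gap, exactly at the step you flagged.} The Imbert--Silvestre estimate concerns inequalities holding where $|Du|\ge\gamma$, i.e.\ with excluded slope set $B_\gamma(0)$, and its bound has the form $C(\operatorname{osc}u+\gamma+\|f\|)$. Writing $B_1(-\xi)$ in that form forces $\gamma\approx|\xi|+1$. Recentring via $u+\xi\cdot x$ instead makes the oscillation of order $|\xi|$. Either way the bound degenerates as $|\xi|\to\infty$.

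Your justification that only small slopes are used is not scale invariant. After rescaling to $u(x_0+\rho y)/\operatorname{osc}_{B_\rho(x_0)}u$, the excluded ball is centred at $-\rho\xi/\operatorname{osc}_{B_\rho(x_0)}u$. This center is $O(1)$ at scales where $\operatorname{osc}_{B_\rho(x_0)}u\approx|\xi|\rho$, which \Cref{le2.1} does not rule out. So the excluded ball can sit inside the range of slopes the sliding paraboloids use, and you would have to reprove the measure estimate for small excluded balls in arbitrary position.

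The paper uses a different mechanism for $|\xi|\ge a_0$: the Ishii--Lions argument with $L_1|x-y|^\gamma$.
\begin{itemize}
\item At the maximum point, \Cref{le2.1} gives $L_1|x-y|^\gamma\le|u(x)-u(y)|\le C_0|\xi||x-y|$.
\item With $\gamma=1/(4C_0)$, the test slope $L_1\gamma|x-y|^{\gamma-1}$ is therefore at most $|\xi|/4$, and $|q_x|,|q_y|\le|\xi|/2$.
\item Hence $|q+\xi|\simeq|\xi|$, the operator is uniformly elliptic with right-hand side $O(|\xi|^{2-p})$, and the direction field $P(q+\xi)$ moves only by $O(L_2/|\xi|)$.
\end{itemize}
The small exponent is what turns a Lipschitz bound of size $|\xi|$ into test gradients that stay away from $-\xi$. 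Your main line has no substitute for this idea.

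\textbf{Your fallback, by contrast, is a correct proof.} It is genuinely different from the paper's, and you should make it your main argument.
\begin{itemize}
\item On $B_\rho(x_0)\Subset B_1$, let $h$ minimize $\int|Dh+\xi|^p$ in $u+W^{1,p}_0(B_\rho(x_0))$; it is also a viscosity solution.
\item Constants solve the homogeneous equation, so $\operatorname{osc}_{B_\rho(x_0)}h\le\operatorname{osc}_{\overline B_\rho(x_0)}u$.
\item By \Cref{l.vis.Pucci} and the Krylov--Safonov Harnack inequality, $\operatorname{osc}_{B_{\rho/2}(x_0)}h\le\theta\operatorname{osc}_{B_\rho(x_0)}h$ with $\theta=\theta(n,p)<1$.
\item \Cref{ABPw} gives $\sup_{B_\rho(x_0)}|u-h|\le C\rho^{p/(p-1)}\|f\|_{L^\infty}^{1/(p-1)}$. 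The constant $C$ is independent of $\xi$ because the monotonicity inequality involves only $Du-Dh$.
\end{itemize}
State the factor $\rho^{p/(p-1)}$ explicitly; it comes from the power of $|\Omega|$ in \eqref{abp}. It is what closes the recursion $\operatorname{osc}_{B_{\rho/2}(x_0)}u\le\theta\operatorname{osc}_{\overline B_\rho(x_0)}u+C\rho^{p/(p-1)}$ and yields $\beta=\beta(n,p)$.

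Compared with the paper, this route needs no case split, no Jensen--Ishii computation and no Imbert--Silvestre theorem. Uniformity in $\xi$ comes for free, since neither $\mathcal S(1,p-1,0)$ nor the constant in \eqref{abp} depends on $\xi$. It also extends to $f\in L^q$ with $q>n/p$, where the exponent becomes $(p-n/q)/(p-1)>0$. The paper's route is a direct viscosity argument with the explicit exponent $\min(1/(4C_0),\tau)$, but it still relies on the variational Lipschitz theory through \Cref{le2.1}.
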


\begin{proof}
We only prove that
\begin{equation*}
[u]_{C^\beta(\overline{B}_{1/2})}\leq C,
\end{equation*}
where $C$ depends only on $n,p$. Then a standard covering argument leads to \cref{e2.12}.

We distinguish two cases. Let $a_0=a_0(n,p)$ be a large constant to be specified later.

\textbf{Case (i): $|\xi|\geq a_0$.}
To prove the H\"{o}lder  continuity, we use the Ishii-Lions argument (see \cite{MR1031377,MR2995669,MR3800106,LLYZ2025}). For any $x_0 \in B_{1/2}$, consider
\begin{equation}\label{key}
  M=\sup_{x,y\in \overline{B}_{3/4}}
  \left(u(x) - u(y) - L_1|x - y|^{\gamma} - L_2|x - x_0|^2 - L_2|y - x_0|^2\right),
\end{equation}
where $L_1\gg L_2$ are two large constants and $\gamma>0$ is a small constant to be specified later.

If we can prove that $M\leq 0$, then for any $y\in \overline B_{1/2}$,
\begin{equation*}
u(x_0) - u(y) - L_1|x_0 - y|^{\gamma}  - L_2|y - x_0|^2\leq M\leq 0
\end{equation*}
and
\begin{equation*}
u(y) - u(x_0) - L_1|y - x_0|^{\gamma}  - L_2|y - x_0|^2\leq M\leq 0,
\end{equation*}
which implies that $u\in C^{\gamma}(x_0)$. Since $x_0\in B_{1/2}$ is arbitrary, we obtain $u\in C^{\gamma}(\overline{B}_{1/2})$.

In the following, we prove that $M\leq 0$ by contradiction. Assume that $M>0$. Let $(x,y)$ be a maximum point in $\overline{B}_{3/4}\times \overline{B}_{3/4}$. Then
\begin{equation}\label{e2.9}
  L_1|x-y|^{\gamma} + L_2|x - x_0|^2 + L_2|y - x_0|^2
  \leq |u(x)-u(y)| \leq 1.
\end{equation}
Choosing $L_2> 16$ gives
\begin{equation}\label{x0y0}
  |x-x_0|,|y-x_0|< \frac{1}{4}.
\end{equation}
Hence $x,y\in B_{3/4}$. Moreover, $x\neq y$, otherwise $M\leq 0$.
Set
\begin{equation*}
  \delta=x-y,\quad \hat{\delta}=\frac{\delta}{|\delta|},\quad
  \hat q=L_1\gamma|\delta|^{\gamma-1}\hat{\delta},
\end{equation*}
and define
\begin{equation}\label{qxqy}
  q_x =\hat q + 2L_2(x - x_0) \quad \text{and} \quad q_y = \hat q - 2L_2(y - x_0).
\end{equation}
Moreover, let
\begin{equation*}
  Z = L_1 \left(\gamma(\gamma-1)|\delta|^{\gamma-2}\hat{\delta}\otimes\hat{\delta}
  +\gamma|\delta|^{\gamma-2}(I-\hat{\delta}\otimes\hat{\delta})\right)
  =L_1\left(\gamma(\gamma-2)|\delta|^{\gamma-2}\hat{\delta}\otimes\hat{\delta}
  +\gamma|\delta|^{\gamma-2}I\right).
\end{equation*}
and
\begin{equation*}
  A=
  \begin{pmatrix}
		Z & -Z \\
		-Z & Z
	\end{pmatrix}
	+ 2L_2I,
\end{equation*}
where $I$ denotes the $2n\times 2n$ identity matrix.

To obtain appropriate viscosity inequalities, we apply the standard Jensen-Ishii’s Lemma (see \cite[Theorem 3.2]{MR1118699}, also \cite{MR1031377, MR2911421}) with a parameter $\varepsilon>0$. By choosing $\varepsilon$ sufficiently small (depending on $\|Z\|$) such that the higher-order term $\varepsilon A^2$ is bounded by $I$, we can construct a limiting super-jet $(q_x,X)$ of $u$ at $x$ and a limiting sub-jet $(q_y, Y)$ of $u$ at $y$. Then, the matrices $X$ and $Y$ satisfy the following $2n\times 2n$ matrix inequality:
\begin{equation}\label{XYMatrix}
  \begin{pmatrix}
		X & 0 \\
		0 & -Y
	\end{pmatrix}
	\leq
	\begin{pmatrix}
		Z & -Z \\
		-Z & Z
	\end{pmatrix}
	+ (2L_2+1)I.
\end{equation}
Applying \eqref{XYMatrix} to $(v,v)\in \mathbb{R}^n\times \mathbb{R}^n$, we have
\begin{equation*}
  \langle (X-Y)v,v\rangle \leq (4L_2+2)|v|^2\leq 5L_2|v|^2.
\end{equation*}
Thus, $X-Y\leq 5L_2I$, i.e., all eigenvalues of $X-Y$ are at most $5L_2$. In addition, applying the particular vector $(\hat{\delta},-\hat{\delta})$, we obtain
\begin{equation*}
  \begin{aligned}
  \langle (X-Y)\hat{\delta},\hat{\delta}\rangle
  \leq 4\langle Z\hat{\delta},\hat{\delta}\rangle+ (4L_2+2)
  \leq 4L_1\gamma(\gamma-1)|\delta|^{\gamma-2}+ 5L_2.
  \end{aligned}
\end{equation*}
Since $0<\gamma<1$, at least one eigenvalue of $X-Y$ is at most $4L_1\gamma(\gamma-1)|\delta|^{\gamma-2}+ 5L_2$ which will be negative if $L_1\gg L_2$ is large enough. As in \cite[Proof of Lemma 4]{MR2995669}, this yields
\begin{equation}\label{Part1}
  \mathcal{M}^+(X - Y,1,p-1) \leq  CL_2+4L_1\gamma(\gamma-1)|\delta|^{\gamma-2},
\end{equation}
where $C$ depends only on $n,p$.

By the interior Lipschitz regularity for $u$ (see \Cref{le2.1}) and choosing $a_0\geq 1$, we have
\begin{equation*}
|u(x)-u(y)|\leq C(1+|\xi|)|x-y|\leq C_0|\xi||x-y|,
\end{equation*}
where $C_0$ depends only on $n$ and $p$. Combining with \cref{e2.9} gives
\begin{equation}\label{e2.10}
L_1|x-y|^{\gamma}\leq |u(x)-u(y)|\leq C_0|\xi||x-y|.
\end{equation}
We further choose $a_0\geq 16L_2$ and $\gamma=  1/(4C_0)$. Then
\begin{equation*}
|q_x|= \left|L_1\gamma|\delta|^{\gamma-1}\hat{\delta}+2L_2(x - x_0)\right|\leq C_0\gamma |\xi|+4L_2\leq |\xi|/2.
\end{equation*}
Similarly, $|q_y|\leq |\xi|/2$. Hence,
\begin{equation*}
\frac{|\xi|}{2}\leq  |q_x+\xi|,|q_y+\xi|\leq \frac{3|\xi|}{2}.
\end{equation*}
Additionally, the viscosity inequalities
\begin{equation*}
  |q_x+\xi|^{p-2}F(q_x, X)\geq f(x), \quad
 |q_y+\xi|^{p-2}F(q_y, Y) \leq f(y),
\end{equation*}
where
\begin{equation*}
  F(q,X)=\mathrm{tr} X + (p-2)\frac{(q_i+\xi_i)(q_j+\xi_j)}{|q+\xi|^{2}}
	X_{ij},
\end{equation*}
 yield
\begin{equation*}
  F(q_x,X)\geq -(a_0/2)^{2-p}\geq -1 ~~~~\mbox{ and }~~~~
  F(q_y,Y)\leq (a_0/2)^{2-p}\leq 1.
\end{equation*}
Then
\begin{equation}\label{Fqxqy}
  F(q_x,X)-F(q_y,Y)\geq -2.
\end{equation}

Let
\begin{equation*}
  P(v)=\frac{v \otimes v}{|v|^2}.
\end{equation*}
By noting \eqref{x0y0} and \eqref{qxqy}, we have
\begin{equation*}
  |q_x-q_y|\leq 2L_2\left(|x-x_0|+|y-x_0|\right)\leq 2L_2.
\end{equation*}
Then by a direct calculation, we have
\begin{equation}\label{e2.11}
  |P(q_x+\xi)-P(q_y+\xi)|  \leq \frac{4|q_x-q_y| }{\min(|q_x+\xi|,|q_y+\xi|)}\leq 8 |\xi|^{-1}|q_x-q_y|  \leq 16L_2a_0^{-1}.
\end{equation}

Next, by applying \eqref{XYMatrix} to $(v,0)\in \mathbb{R}^n\times \mathbb{R}^n$, we obtain
\begin{equation*}
  \langle Xv,v\rangle \leq \langle Zv,v\rangle+(2L_2+1)|v|^2\leq \langle Zv,v\rangle+3L_2|v|^2.
\end{equation*}
Thus, $X\leq Z+3L_2I$, i.e., all eigenvalues of $X$ are at most $\|Z\|+3L_2$. In addition, if the minimum eigenvalue of $X$ is negative, denoted by $\mu$, we have
\begin{equation*}
-1\leq F(q_x,X)  \leq \mathcal{M}^+(X,1,p-1)
  \leq (n-1)(p-1)\left(\|Z\|+3L_2\right)+\mu.
\end{equation*}
Then,
\begin{equation*}
  \mu\geq -(n-1)(p-1)\left(\|Z\|+3L_2\right)-1.
\end{equation*}
That is, the eigenvalues of $X$ have a lower bound. Hence,
\begin{equation*}
  \|X\|\leq C \left(\|Z\|+L_2\right),
\end{equation*}
where $C$ depends only on $n,p$.

By noting that
\begin{equation*}
  \|Z\|\leq L_1\gamma |\delta|^{\gamma-2},
\end{equation*}
we obtain
\begin{equation}\label{Part3}
\begin{aligned}
|F(q_y,X)-F(q_x,X)|
\leq& C|P(q_x+\xi)-P(q_y+\xi)| \|X\|\\
  \leq& CL_2a_0^{-1}\left(L_1\gamma|\delta|^{\gamma-2}+L_2\right),
\end{aligned}
\end{equation}
where $C$ depends only on $n$ and $p$.
Note that
\begin{equation}\label{Part2}
  \begin{aligned}
   F(q_x,X)-F(q_y,Y)
   =& F(q_x,X)-F(q_y,X)+ F(q_y,X)-F(q_y,Y)\\
   \leq & |F(q_x,X)-F(q_y,X)|+\mathcal{M}^+(X - Y,1,p-1).
  \end{aligned}
\end{equation}
By combining with \cref{Part1}, \cref{Fqxqy} and \cref{Part3}, we arrive at
\begin{equation*}
-2\leq C_1a_0^{-1}L_1\gamma|\delta|^{\gamma-2}+C_2+4L_1\gamma(\gamma-1)|\delta|^{\gamma-2},
\end{equation*}
where $C_1$ and $C_2$ depend only on $n,p,L_2$. Finally, choose $a_0$ large enough such that
\begin{equation*}
C_1a_0^{-1}\leq 1-\gamma
\end{equation*}
and then choose $L_1$ large enough such that
\begin{equation*}
L_1\gamma(1-\gamma)>C_2+2.
\end{equation*}
Since $0<|\delta|<1/2$ and $0<\gamma<1$, we have $|\delta|^{\gamma-2}\ge1$. Consequently,
\begin{equation*}
  -2\leq C_2-\bigl[4(1-\gamma)-C_1a_0^{-1}\bigr]L_1\gamma|\delta|^{\gamma-2}
\leq C_2-3L_1\gamma(1-\gamma)|\delta|^{\gamma-2}
\leq C_2-L_1\gamma(1-\gamma)<-2,
\end{equation*}
which is a contradiction.
Therefore,
\begin{equation*}
  [u]_{C^{\gamma}(\overline B_{1/2})}\leq C,
\end{equation*}
where $C$ depends only on $n,p$.	

\medskip

\textbf{Case (ii). The case $|\xi|\leq a_0$.}
Since
\begin{equation*}
  \mathcal M^{-}(X,1,p-1)\leq \mathrm{tr} X
+(p-2)\frac{(q_i+\xi_i)(q_j+\xi_j)}{|q+\xi|^{2}} X_{ij}
\leq \mathcal M^{+}(X,1,p-1).
\end{equation*}
and
\begin{equation*}
|q+\xi|^{p-2}\geq a_0^{p-2} ~~\mbox{ whenever }~~|q|\geq 2a_0
\end{equation*}
every solution of \eqref{e} satisfies
\begin{equation*}
  \begin{cases}
    \mathcal M^{+}(D^2 u,1,p-1) + {a}_0^{2-p} |f| \geq 0,& \\
    \mathcal M^{-}(D^2 u,1,p-1)-  {a}_0^{2-p} |f| \leq 0,&
  \end{cases}
\end{equation*}
whenever $|Du|\geq 2a_0$. Therefore, from \cite[Theorem 1.1]{MR3500837}, there exists $\tau\in (0,1)$, depending only on $n,p$, such that
\begin{equation*}
  [u]_{C^{\tau}(\overline B_{1/2})}\leq C\left(\operatorname{osc}_{B_1} u+ \max \left\{2a_0,\|f\|_{L^n(B_1)}\right\}\right)
  \leq C,
\end{equation*}
where $C$ depends only on $n,p$. Finally, by setting $\beta=\min(\gamma,\tau)=\min(1/(4C_0),\tau)$, the proof is completed.
\end{proof}

Next, we establish the following uniform boundary estimate.

\begin{lemma}\label{d1}
Let $\xi\in \mathbb{R}^n$ and $0<\delta<1/4$. Suppose that $u$ is a viscosity solution of
\begin{equation*}
	\begin{cases}
      \mathrm{div}\left(|Du+\xi|^{p-2}(Du+\xi) \right)=f  &~~\mbox{ in }~~ \Omega_1,\\
	  u=g   &~~\mbox{on}~~(\partial\Omega)_1,
	\end{cases}
\end{equation*}
where $f\in L^q(\Omega_1)$ with $q>n/p$ and $1/p+1/q\leq 1$.
If
$\|u\|_{L^\infty(\Omega_1)}\leq 1$ and
\begin{equation*}
  \|f\|_{L^q(\Omega_1)}\leq\delta^{p-1},\
  \|g\|_{L^\infty((\partial\Omega)_1)}\leq\delta,\
  \operatorname{osc}_{B_1}\partial\Omega \leq\delta,
\end{equation*}
where $\operatorname{osc}_{B_1}\partial\Omega \leq\delta$ means
\begin{equation*}
  B_1\cap \{x_n>\delta\}\subset \Omega_1 \subset B_1\cap \{x_n >-\delta\},
\end{equation*}
then
\begin{equation*}
  |u(x)|\leq C(x_n+2\delta),\quad x\in \Omega_{1/2},
\end{equation*}
where $C$ depends only on $n,p$ and $q$.
\end{lemma}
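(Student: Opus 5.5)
The bound $|u(x)|\le C(x_n+2\delta)$ is a linear barrier estimate near an almost-flat boundary. The natural route is comparison with explicit barriers through \Cref{ABPw}, treating $f$ as a perturbation whose contribution is of order $\delta$. By symmetry ($-u$ solves the same kind of equation with $-\xi$, $-f$, $-g$), it suffices to prove the upper bound $u(x)\le C(x_n+2\delta)$ on $\Omega_{1/2}$.

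\textbf{Step 1: reduction to the homogeneous equation.} Let $w$ be the weak solution of $\mathrm{div}(|Dw+\xi|^{p-2}(Dw+\xi))=0$ in $\Omega_1$ with $w=u$ on $\partial\Omega_1$. Its existence comes from minimizing $\int|Dw+\xi|^p$ with that boundary datum, using \Cref{vis_weak} to pass between weak and viscosity notions. Applying \Cref{ABPw} twice, with $f_1=f$, $f_2=0$ and then the reverse, gives
\begin{equation*}
\sup_{\Omega_1}|u-w|\le C|\Omega_1|^{\frac{p}{n(p-1)}-\frac{1}{q(p-1)}}\|f\|_{L^q(\Omega_1)}^{\frac{1}{p-1}}\le C\delta,
\end{equation*}
because the exponent of $|\Omega_1|$ is positive ($q>n/p$). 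It is therefore enough to bound $w$. Note that $|w|\le1$ (comparison with constants), and $|w|\le\delta$ on $(\partial\Omega)_1$.

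\textbf{Step 2: barrier for $w$ in the half-ball setting.} Every $x\in\Omega_{1/2}$ lies in $\{x_n>-\delta\}$. Fix $x_0'$ with $|x_0'|<1/2$ and work in the shifted region $D=\Omega_1\cap\{|x'-x_0'|<1/4,\ -\delta<x_n<1/4\}$, which is contained in $B_1$. On $\partial D\cap\Omega_1$ we have $w\le1$, and on $\partial D\cap\partial\Omega$ we have $w\le\delta$.

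By \Cref{l.vis.Pucci}, $w\in\mathcal S(1,p-1,0)$. This membership is the key point, because it removes the dependence on $\xi$: barriers only need to be supersolutions of the Pucci equation $\mathcal M^-\le0$, for which standard barriers exist.

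Take the classical barrier
\begin{equation*}
\phi(x)=\delta+C\bigl((x_n+\delta)+|x'-x_0'|^2-k(x_n+\delta)^2\bigr),
\end{equation*}
choosing $k$ so that $\mathcal M^-(D^2\phi,1,p-1)=C\bigl(2(n-1)-2k(p-1)\bigr)\le0$, i.e.\ $k=(n-1)/(p-1)$. Two constraints must then be checked. First, $\phi\ge1$ on the lateral boundary $|x'-x_0'|=1/4$ and on the top $x_n=1/4$; this needs $C$ large and $(x_n+\delta)$ small enough that the linear term dominates the quadratic one. If the top piece fails, I would shrink the height of $D$ (for instance to $\min(1/4,1/(4k))$) so that $s-ks^2\ge s/2$ there, and shrink the lateral radius correspondingly. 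Second, $\phi\ge\delta$ on $\partial\Omega\cap D$, which is immediate.

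The comparison principle for viscosity supersolutions of $\mathcal M^-$ against $w\in\underline{\mathcal S}$ (from the Pucci theory cited, or the ABP estimate of \cite{MR1376656}) then gives $w\le\phi$ in $D$. At $x'=x_0'$ this yields $w(x_0',x_n)\le\delta+C(x_n+\delta)\le C(x_n+2\delta)$. Points with $x_n\ge$ the height of $D$ are handled trivially, since there $|w|\le1\le C x_n$.

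\textbf{Main obstacle.} The delicate step is Step 2: one must verify that the comparison between a Pucci-class function and a smooth barrier is legitimate in the nonsmooth domain $D$, and tune the geometry so that the quadratic correction does not destroy positivity. Step 1 is routine given \Cref{ABPw}. Combining the two steps, and the symmetric lower bound, gives $|u|\le C\delta+C(x_n+2\delta)\le C(x_n+2\delta)$.
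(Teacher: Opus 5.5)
\textbf{The barrier step fails as written.} You compare a supersolution of $\mathcal M^-$ with a function in $\underline{\mathcal S}(1,p-1,0)$, but no such comparison principle exists. Membership $w\in\underline{\mathcal S}(1,p-1,0)$ means $\mathcal M^+(D^2w,1,p-1)\ge0$, so an upper barrier must satisfy $\mathcal M^+(D^2\phi,1,p-1)\le0$. The condition $\mathcal M^-(D^2\phi,1,p-1)\le0$ is strictly weaker and gives no comparison.

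For your $\phi$ one has $D^2\phi=2C\,\mathrm{diag}(1,\dots,1,-k)$, hence $\mathcal M^+(D^2\phi,1,p-1)=2C\bigl((p-1)(n-1)-k\bigr)$. This is positive for $k=(n-1)/(p-1)$ whenever $p>2$. The problem is not only formal. Take $\xi=Re_1$ with $R\gg C$. Then $\nu=(D\phi+\xi)/|D\phi+\xi|\approx e_1$ on $D$, and $\mathrm{div}(|D\phi+\xi|^{p-2}(D\phi+\xi))=|D\phi+\xi|^{p-2}\bigl(\Delta\phi+(p-2)\langle D^2\phi\,\nu,\nu\rangle\bigr)\approx 2CR^{p-2}(n+p-3-k)>0$. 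So $\phi$ is a strict subsolution of the very equation $w$ solves, and no comparison principle can place $w$ below it.

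The repair is as follows:
\begin{itemize}
\item Take $k\ge(p-1)(n-1)$.
\item Since $s-ks^2\ge s/2$ holds only for $s=x_n+\delta\le1/(2k)$, take the height of $D$ equal to $1/(4k)$.
\item Dispose of the case $\delta>1/(4k)$ trivially: on $\Omega_1$ one has $x_n+2\delta>\delta$, so $|u|\le1\le4k(x_n+2\delta)$.
\item Take $C=\max\{16,8k\}$, which gives $\phi\ge1$ on the lateral and top parts of $\partial D$.
\end{itemize}

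\textbf{Step 1 is also not justified.} Minimizing $\int|Dw+\xi|^p$ with datum $u$ requires $u\in W^{1,p}(\Omega_1)$. But $u$ is only in $W^{1,p}_{\mathrm{loc}}$, $g$ is merely bounded, and $u$ is not even defined on $\partial B_1\cap\Omega$. Moreover, $\partial\Omega$ inside the slab $\{|x_n|\le\delta\}$ is unrestricted, so $w$ need not attain its boundary values continuously. You need that continuity both in \Cref{ABPw} and in your comparison on $D$.

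The paper avoids this issue. It solves the homogeneous problem only on the flat half-ball $B^+_{3/4}-\delta e_n$ and compares $u$ with that solution $v$ on $\Omega\cap(B^+_{3/4}-\delta e_n)$ via \Cref{ABPw}. It then reads off $v\le C(x_n+\delta)$ from \Cref{l.vis.Pucci} and the boundary $C^{1,\alpha}$ estimate for the Pucci class.

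With the corrected barrier you can drop $w$ altogether. The condition $\mathcal M^+(D^2\phi,1,p-1)\le0$ implies $\mathrm{tr}\bigl((I+(p-2)\nu\otimes\nu)D^2\phi\bigr)\le0$ for every unit vector $\nu$. Hence $\phi$ is a classical, and therefore weak, supersolution of $\mathrm{div}(|D\phi+\xi|^{p-2}(D\phi+\xi))\le0$ for every $\xi$. Applying \Cref{ABPw} to $u$ and $\phi$ on $D$ then gives $u\le\phi+C\delta$ directly. This repaired argument is a valid and more elementary alternative to the paper's route, because only a linear bound, not boundary $C^{1,\alpha}$ regularity, is needed here.
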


\begin{proof}
If $\delta\geq1/8$, then
\begin{equation*}
  |u(x)|\leq1\leq8(x_n+2\delta),\qquad x\in\Omega_{1/2}.
\end{equation*}
Hence, we may assume that $\delta<1/8$.
By a rotation, we may assume that $\Omega_1\subset B_1\cap \{x_n>-\delta\}$.
Set $\widetilde B_r^+=B_r^+-\delta e_n$ and
$\widetilde T_r=T_r-\delta e_n$. Then
\begin{equation*}
  \Omega_{1/2}\subset\widetilde B_{5/8}^+
\subset\widetilde B_{3/4}^+\subset B_1.
\end{equation*}

Let $v$ be a solution of
\begin{equation*}
  \begin{cases}
    \mathrm{div}\left(|Dv+\xi|^{p-2}(Dv+\xi)\right)= 0 & ~~\mbox{ in }~~ \widetilde{B}_{3/4}^+, \\
    v= 0 & ~~\mbox{ on }~~ \widetilde{T}_{3/4}, \\
    v= 1 & ~~\mbox{ on }~~ \partial \widetilde{B}_{3/4}^+ \backslash \widetilde{T}_{3/4}.
  \end{cases}
\end{equation*}
By \Cref{l.vis.Pucci}, $v \in \mathcal S(1,p-1,0)$.
By the boundary pointwise $C^{1,\alpha}$ regularity of the Pucci class (\cite[Lemma 3.1]{MR3246039}),
\begin{equation*}
  |v(x)|\leq C(x_n+\delta) ~~\mbox{ in }~~ \widetilde{B}_{5/8}^+.
\end{equation*}
In addition, applying \Cref{ABPw} in $\Omega\cap\widetilde B_{3/4}^+\subset\Omega_1$, we have
\begin{equation*}
  \sup_{\Omega\cap  \widetilde{B}^+_{3/4}}(u-v)\leq C \delta,
\end{equation*}
where $C$ depends only on $n, p$ and $q$.
Then,
\begin{equation*}
  u(x)
  \leq v(x)+C \delta
  \leq C(x_n+2\delta).
\end{equation*}
The proof for
\begin{equation*}
  u(x)\geq -C(x_n+2\delta)
\end{equation*}
is similar and we omit it. Hence, the proof is completed.

\end{proof}

\section{Boundary pointwise Lipschitz regularity}
\label{sec:Lip}
In this section, we prove the boundary pointwise Lipschitz regularity stated in \Cref{t.Lip}. The proof follows the iteration scheme developed in \cite{MR4713521, DLL_2025}. 


\begin{proof}
We first make some normalization. Let
\begin{equation*}
	M=\|u\|_{L^{\infty}(\Omega_1)}
    +\|f\|_{C_q^{-1,\text{Dini}}(0)}^{1/(p-1)}+\|g\|_{C^{1,\mathrm{Dini}}(0)}
\end{equation*}
and
\begin{equation*}
  \tilde{u}=\frac{u-g(0)-Dg(0)\cdot x}{M}.
\end{equation*}
Then $\tilde{u}$ satisfies
\begin{equation}\label{e.p-lap.tilde}
\begin{cases}
	\mathrm{div}\left(|D \tilde u+Dg(0)/M|^{p-2} (D \tilde u+Dg(0)/M)\right) =\tilde f & \text{in } \Omega_1, \\
	\tilde u = \tilde g & \text{on } (\partial\Omega)_1,
\end{cases}
\end{equation}
where
\begin{equation*}
  \tilde f=\frac{f}{M^{p-1}}\ \ \mathrm{and}\ \ \tilde g=\frac{g -g(0)-Dg(0)\cdot x}{M}.
\end{equation*}
Hence,
\begin{equation*}
 \tilde u(0)=\tilde g(0)=|D \tilde g(0)|=0.
\end{equation*}
In the following, we only need to prove the boundary pointwise Lipschitz regularity for $\tilde u$.
Define
\begin{equation}\label{omg_Lip}
	\omega(r)=\max\{\omega_\Omega(r),\omega_{\tilde f}^{1/(p-1)}(r),\omega_{\tilde g}(r)\}.
\end{equation}
Without loss of generality, we may assume that
\begin{equation*}
  \omega(1)\leq \eta^2,\quad \int_0^{1}\frac{\omega(r)}{r}dr\leq \eta^2,
\end{equation*}
where $0<\eta<1/8$ is a constant depending only on $n$ and $p$, to be specified later.

We now use an iteration argument. We establish the following inductive claim:
there exist positive constants $\bar C$, $\widetilde C$ and a nonnegative sequence $a_k$ ($k\geq -1$) such that for every $k=0,1,2,...$,
\begin{equation}\label{sup-Lip}
	\sup_{\Omega_{\eta^k}}(\tilde{u}-a_k x_n)\leq \widetilde C   \eta^k A_k
\end{equation}
and
\begin{equation}\label{ak-Lip}
	|a_k-a_{k-1}|\leq \bar C \widetilde C  A_k
\end{equation}
with
\begin{equation}\label{A-Lip}
	A_0=\eta^2,\quad A_{k+1}=\max\{\omega(\eta^k),\eta^{\bar \alpha/2}A_{k}\},
\end{equation}
where $0<\bar \alpha<1$ is the constant from \Cref{t.p-Lap.flat}, and $\bar C$, $\widetilde C$ depend only on $n,p$ and $q$.

\medskip

For $k=0$, set $a_{-1}=a_0=0$. Choosing $\widetilde C$ sufficiently large so that
\begin{equation}\label{3.11}
	\widetilde C A_0=\widetilde C \eta^2 \geq 1,
\end{equation}
immediately yields \eqref{sup-Lip} and \eqref{ak-Lip} for  $k=0$.
Assume that \eqref{sup-Lip} and \eqref{ak-Lip} hold for some $k\geq 0$. We prove that they also hold for $k+1$.

Set
\begin{equation*}
  r=\eta^k/2,\ \ \Omega_r=\Omega\cap B_r, \ \ (\partial\Omega)_r=\partial\Omega\cap B_r,\ \ \widetilde \Omega_r=\Omega\cap \widetilde B_r^+,
\end{equation*}
where
\begin{equation*}
  \widetilde B_r^+:=B_r^+(-r\omega(r)e_n)=B_r^+-r\omega(r)e_n, \ \ \
\widetilde T_r:=T_r(-r\omega(r)e_n)=T_r-r\omega(r)e_n.
\end{equation*}

We first observe that
\begin{equation*}
  \Omega_{r/2} \subset \widetilde{\Omega}_r \subset \Omega_{2r},\ \ \
  \Omega_{\eta^{k+1}} = \Omega_{2\eta r}  \subset \widetilde{B}_{r/2}^+.
\end{equation*}
Indeed, since
\begin{equation*}
  \omega(\eta)\leq\omega(1)\leq \eta^2\leq \frac{1}{8},
\end{equation*}
the first inclusion is immediate. For the second one, if $x\in \Omega_{\eta^{k+1}}$, then
\begin{equation*}
  |x+r\omega(r)e_n|<2\eta r+\eta^2 r<\frac{r}{2},
\end{equation*}
while the exterior $C^{1,\mathrm{Dini}}$ condition at $0$ gives $x_n>-r\omega(r)$. Hence, $x \in \widetilde B_{r/2}^+$.

We now prove the inductive step by a perturbation argument. Following the idea of \cite{MR4713521, DLL_2025}, we regard $\tilde f$, $\tilde g$ and $\partial\Omega$ as perturbations of $0$, $0$ and a hyperplane, respectively.

Set $\xi_0=Dg(0)/M$. 
Let $v$ be a viscosity solution of
\begin{equation}\label{v}
	\begin{cases}
	\mathrm{div} \left(|Dv+a_k e_n+\xi_0|^{p-2}(Dv+a_k e_n+\xi_0) \right)= 0  &~~\mbox{ in }~~ \ \   \widetilde B_r^+,\\
    v=0\   &~~\mbox{ on }~~   \ \ \widetilde T_r,\\
    v= \widetilde C  \eta^k A_k     &~~\mbox{ on }~~  \ \   \partial \widetilde B_r^+\backslash \widetilde T_r.
	\end{cases}
\end{equation}
By \Cref{t.p-Lap.flat}, there exists a constant $a_r$ such that
\begin{equation}\label{v3'}
	\begin{aligned}
		\|v-a_r (x_n+r\omega(r))\|_{L^{\infty}(\Omega_{\eta^{k+1}})}
		&\leq \frac{C_1}{r^{1+\bar \alpha}}(2\eta r+\eta^2 r)^{1+\bar \alpha}	\|v\|_{L^{\infty}(\widetilde B_r^+)}\\
		&\leq \frac{C_1(3\eta r)^{1+\bar \alpha}}{r^{1+\bar \alpha}}
		\widetilde C   \eta^{k} A_{k}
		\leq 9 C_1 \eta^{\frac{\bar \alpha}{2}}
		\widetilde C   \eta^{k+1} A_{k+1}.
	\end{aligned}
\end{equation}
and
\begin{equation}\label{ar}
	\begin{aligned}
		|a_r|
		\leq \frac{C_1}{r}	\|v\|_{L^{\infty}(\widetilde B_r^+)}
		\leq \frac{C_1 \widetilde C  }{r}	\eta^{k} A_{k},
	\end{aligned}
\end{equation}
where $C_1$ and $\bar \alpha$ are given by \Cref{t.p-Lap.flat}.
Since $\omega(r)\leq \eta^2$,
\begin{equation}\label{v4'}
		|a_r r\omega(r)|
		\leq \frac{C_1 \widetilde C }{r}	\eta^{k} A_{k} r \eta^2
		\leq C_1  \eta^{1-\frac{\bar \alpha}{2}} \widetilde C \eta^{k+1} A_{k+1}.
\end{equation}
Consequently,
\begin{equation}\label{v2}
	\begin{aligned}
		\|v-a_r x_n\|_{L^\infty (\Omega_{\eta^{k+1}})}
		\leq \left(9C_1 \eta^{\frac{\bar \alpha}{2}}+C_1 \eta^{1-\frac{\bar \alpha}{2}}\right)
        \widetilde C  \eta^{k+1} A_{k+1}.
	\end{aligned}
\end{equation}

\medskip

Set
\begin{equation*}
  \bar{u}=\tilde u-a_k x_n.
\end{equation*}
Then $\bar{u}$ satisfies
\begin{equation}\label{tilde_u}
	\begin{cases}
	\mathrm{div} (|D\bar{u}+a_k e_n+\xi_0|^{p-2}(D\bar{u}+a_k e_n+\xi_0))=\tilde f  &~~\mbox{ in }~~  \Omega\cap \widetilde B_r^+,\\
	\bar{u}=\tilde g-a_k x_n   &~~\mbox{ on }~~     \partial\Omega\cap \widetilde B_r^+,\\
    \bar{u}\leq \widetilde C   \eta^k A_k    &~~\mbox{ on }~~     \Omega\cap \partial \widetilde B_r^+.
	\end{cases}
\end{equation}
Applying \Cref{ABPw} yields
\begin{equation*}
	\begin{aligned}
	\sup_{\Omega_{\eta^{k+1}}} (\bar{u}-v)
	\leq \sup_{\widetilde \Omega_{r}} (\bar{u}-v)
	&\leq \|(\tilde g-a_k x_n)^+\|_{L^\infty(\partial\Omega\cap\widetilde B_r^+)}+C_2 r^{\frac{p}{p-1}-\frac{n}{q(p-1)}}\|\tilde f\|^{\frac{1}{p-1}}_{L^{q}(\widetilde \Omega_{r})}\\
	&\leq \|\tilde g\|_{L^\infty(\partial\Omega\cap\widetilde B_r^+)}+a_k r\omega(r)+C_2 r^{\frac{p}{p-1}-\frac{n}{q(p-1)}}\|\tilde f\|^{\frac{1}{p-1}}_{L^{q}(\widetilde \Omega_{r})},
	\end{aligned}
\end{equation*}
where $C_2$ depends on $n,p$ and $q$.

Since $\tilde g\in C^{1,\mathrm{Dini}}(0)$ with $\tilde g(0)=|D \tilde g(0)|=0$, we have
\begin{equation}\label{J1}
	\begin{aligned}
	    \|\tilde g\|_{L^\infty(\partial\Omega\cap\widetilde B_r^+)}
		&\leq  \|\tilde g\|_{L^\infty (\partial\Omega\cap B_{2r})}
		\leq   2r \omega(2r)\leq   \eta^k \omega(\eta^k)\\
		&=\frac{1}{\eta^{1+\frac{\bar \alpha}{2}}}  \eta^{k+1} \eta^{\frac{\bar \alpha}{2}}\omega(\eta^k)
		\leq \frac{1}{\eta^{1+\frac{\bar \alpha}{2}}}  \eta^{k+1} A_{k+1}.
	\end{aligned}
\end{equation}

In addition, by the definition of $A_k$ (\eqref{A-Lip}) and the Dini condition on $\omega$, and the argument in \cite[(3.15) on p. 11]{DLL_2025}, we obtain
\begin{equation}\label{Ak}
	\sum\limits_{k=0}^{\infty} A_k\leq 4\eta^2,
\end{equation}	
after choosing $\eta$ sufficiently small.

Therefore, using the inductive estimate for $a_i-a_{i-1}$ (\eqref{ak-Lip}), we obtain
\begin{equation}\label{J2}
	\begin{aligned}
    |a_k| r\omega(r)
	&\leq \eta^{k} \omega(\eta^k)\sum_{i=0}^{k}|a_i-a_{i-1}|
	\leq \eta^{k} \omega(\eta^k)\bar C\widetilde C   \sum_{i=0}^{k} A_i\\
	&\leq 4\eta^2 \eta^{k} \omega(\eta^k)\bar C\widetilde C
	\leq 4 \eta^{1-\frac{\bar \alpha}{2}} \bar C \widetilde C   \eta^{k+1}A_{k+1}.
\end{aligned}
\end{equation}
Since $\tilde f\in C_q^{-1,\mathrm{Dini}}(0)$,
\begin{equation*}
  \begin{aligned}
	\| {\tilde f}\|^{\frac{1}{p-1}}_{L^{q}(\widetilde \Omega_{r})}
	\leq \| {\tilde f}\|^{\frac{1}{p-1}}_{L^{q}(\Omega_{2r})}
    \leq    (2r)^{\frac{-p}{p-1} + \frac{n}{q(p-1)}} \left(2r\omega_{ {\tilde f}}^{1/(p-1)}(2r) \right)
	\leq    (2r)^{\frac{-p}{p-1} + \frac{n}{q(p-1)}} \left(2r\omega(2r) \right).
\end{aligned}
\end{equation*}
Consequently,
\begin{equation}\label{J3}
	\begin{aligned}
   C_2 r^{\frac{p}{p-1}-\frac{n}{q(p-1)}}\|\tilde f\|^{\frac{1}{p-1}}_{L^{q}(\widetilde \Omega_{r})}
   &\leq C_2 r^{\frac{p}{p-1}-\frac{n}{q(p-1)}}\cdot
    (2r)^{\frac{-p}{p-1} + \frac{n}{q(p-1)}}(2r\omega(2r))\\
	&	\leq C_3  \eta^{k}\omega(\eta^k)
		\leq \frac{C_3}{\eta^{1+\frac{\bar \alpha}{2}}} \eta^{k+1} A_{k+1},
	\end{aligned}
\end{equation}
where $C_3$ depends on $n,p$ and $q$.
Combining \eqref{J1}, \eqref{J2} and \eqref{J3}, we obtain that
\begin{equation}\label{w2}
	\begin{aligned}
		\sup_{\Omega_{\eta^{k+1}}} (\bar u-v)
		&\leq \|\tilde g\|_{L^\infty(\partial\Omega\cap\widetilde B_r^+)}+a_k r\omega(r)+C_2 r^{\frac{p}{p-1}-\frac{n}{q(p-1)}}\|\tilde f\|^{\frac{1}{p-1}}_{L^{q}(\widetilde \Omega_{r})}\\
		&\leq \frac{1}{\eta^{1+\frac{\bar \alpha}{2}}}  \eta^{k+1} A_{k+1}
		+4 \eta^{1-\frac{\bar \alpha}{2}} \bar C \widetilde C    \eta^{k+1}A_{k+1}
		+\frac{C_3}{\eta^{1+\frac{\bar \alpha}{2}}} \eta^{k+1} A_{k+1}
		\\
		& \leq \left(\frac{C_3+1}{\widetilde{C}\eta^{1+\frac{\bar \alpha}{2}}}+
		4 \bar C \eta^{1-\frac{\bar \alpha}{2}} \right)
		\widetilde{C} \eta^{k+1} A_{k+1}.	
	\end{aligned}
\end{equation}
By taking $\eta$ small enough, we have
\begin{equation*}
	9 C_1  \eta^{\frac{\bar \alpha}{2}}<\frac{1}{3}, \ 9 C_1  \eta^{1-\bar \alpha}<\frac{1}{3}
\end{equation*}
and then choosing $\widetilde{C}$ large enough, we get
\begin{equation*}
	\frac{C_3+1}{\widetilde{C}\eta^{1+\frac{\bar \alpha}{2}}}<\frac{1}{3}.
\end{equation*}

Let
\begin{equation*}
  a_{k+1}=a_k+a_r.
\end{equation*}
By \eqref{ar} and choosing
\begin{equation*}
	\bar C=\frac{2 C_1}{\eta^{\bar\alpha/2}},
\end{equation*}
we have
\begin{equation*}
	|a_{k+1}-a_k|=|a_r|
	\leq \frac{C_1}{r}	\widetilde C   \eta^{k} A_{k}
	\leq \frac{2 C_1}{\eta^{\bar \alpha/2}}\widetilde C   A_{k+1}
	=\bar{C} \widetilde C   A_{k+1}.
\end{equation*}
Finally, combining \eqref{v2} and \eqref{w2}, we conclude that
\begin{equation*}
	\begin{aligned}
		\sup_{\Omega_{\eta^{k+1}}}(\tilde u-a_{k+1} x_n)
		=&\sup_{\Omega_{\eta^{k+1}}}(\bar u-v+v-a_r x_n)\\
		\leq& \left(\frac{C_3+1}{\widetilde{C}\eta^{1+\frac{\bar \alpha}{2}}}
		+4 \bar{C} \eta^{1-\frac{\bar \alpha}{2}}
		+9C_1 \eta^{\frac{\bar \alpha}{2}}+ C_1  \eta^{1-\frac{\bar \alpha}{2}}\right)\widetilde C   \eta^{k+1} A_{k+1}\\
		\leq &\left(\frac{C_3+1}{\widetilde{C}\eta^{1+\frac{\bar \alpha}{2}}}
		+9 C_1 \eta^{1-\bar \alpha}
		+9C_1 \eta^{\frac{\bar \alpha}{2}}\right)\widetilde C   \eta^{k+1} A_{k+1}\\
		\leq& \widetilde C   \eta^{k+1} A_{k+1}.
	\end{aligned}
\end{equation*}
Thus, both \eqref{sup-Lip} and \eqref{ak-Lip} hold for $k+1$. The claim then follows by induction.

The regularity $u\in C^{0,1}(0)$ then follows from a standard argument (see \cite[Step 3 in Section 3.2]{DLL_2025}).


\end{proof}

\section{The Hopf lemma}
\label{sec:Hopf}
In this section, we establish the Hopf lemma (\Cref{t.Hopf}). We first prove the \emph{Hopf lemma on flat boundaries} by constructing a barrier function and applying the comparison principle.

\begin{lemma}\label{hpflat}
Let $q>n/p$ and $1/p + 1/q \leq 1$, and let $u\geq 0$ be a viscosity solution of
\begin{equation}\label{v-hopf-flat}
	\begin{cases}
      \mathrm{div}(|Du|^{p-2}Du)=f \ \ &\mbox{in} \ \  B_1^+,\\
	  u \geq 0 \ \  &\mbox{on}   \ \  T_1,
	\end{cases}
\end{equation}	
satisfying
\begin{equation*}
  u(e_n/2)\geq 1.
\end{equation*}
Then there exist constants $\delta_0,c,C>0$, depending only on $n,p$ and $q$, with $\delta_0$ sufficiently small, such that if
\begin{equation*}
  \|f\|_{L^q(B_1^+)}\leq \delta_0,
\end{equation*}
then
\begin{equation*}
  u(x)\geq c x_n - C\|f\|_{L^q(B_1^+)}^{1/(p-1)}
\end{equation*}
for every $x\in B_{1/2}^+$.
\end{lemma}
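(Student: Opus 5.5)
Our plan is a comparison argument: we compare $u$ with a solution $w$ of the \emph{homogeneous} equation whose linear growth from $T_1$ is supplied by the Pucci-class theory. The error caused by $f$ is then controlled by \Cref{ABPw}.

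\textbf{Step 1: lower bound near $e_n/2$.} Since $u\geq 0$ and $u(e_n/2)\geq 1$, we first bound $u$ from below on a fixed ball around $e_n/2$, say $\overline B_{1/8}(e_n/2)$. We would get this from a weak Harnack inequality for nonnegative weak supersolutions of the $p$-Laplacian with right-hand side in $L^q$, $q>n/p$, which is standard De Giorgi--Moser theory (Trudinger/Serrin). It gives
\begin{equation*}
 1\leq u(e_n/2)\leq \sup_{B_{1/8}(e_n/2)}u \leq C\Bigl(\inf_{B_{1/8}(e_n/2)}u+\|f\|_{L^q}^{1/(p-1)}\Bigr).
\end{equation*}
Choosing $\delta_0$ small, this yields $\inf_{B_{1/8}(e_n/2)}u\geq c_0>0$. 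If we prefer to stay inside the paper's tools, we could instead use \Cref{l3.3} with $\xi=0$ on a chain of balls. Note, however, that Hölder continuity alone does not give a lower bound from a single point value, so a Harnack-type inequality is really needed here. This is the step we expect to be the main obstacle.

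\textbf{Step 2: the comparison function $w$.} Let $D=B_{3/4}^+\setminus \overline B_{1/8}(e_n/2)$. We would solve
\begin{equation*}
\mathrm{div}(|Dw|^{p-2}Dw)=0 \text{ in } D,\qquad w=c_0 \text{ on } \partial B_{1/8}(e_n/2),\qquad w=0 \text{ on } \partial B_{3/4}^+.
\end{equation*}
This problem can be solved variationally. The function $w$ is continuous up to the boundary, since $D$ is a Lipschitz domain and hence satisfies the Wiener criterion. We need the linear lower bound
\begin{equation*}
w\geq c\,x_n \quad\text{on } B_{1/2}^+.
\end{equation*}
By \Cref{l.vis.Pucci}, $w\in\mathcal S(1,p-1,0)$, and $w>0$ in $D$ by the strong maximum principle. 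Boundary Hopf for the Pucci class on the flat part $T_{3/4}$ then gives $w\geq c x_n$ near $T_{1/2}$; this could be proved, for instance, with the Pucci barrier $|x-y|^{-\gamma}$ placed in interior balls tangent to $T_1$. On the rest of $B_{1/2}^+$, which stays a positive distance from $\partial B_{3/4}^+$, the bound $w\geq c x_n$ follows from positivity and compactness. Alternatively, one can avoid solving a PDE altogether. Since the homogeneous $p$-Laplacian is rotation invariant and radial solutions are explicit, $\phi(x)=|x-y|^{-\gamma}$ with $\gamma>(n-p)/(p-1)$ is a classical strict subsolution away from $y$. Taking $w$ to be a suitable truncated barrier of this form, centered at points $y=(y',-1/4)$ or in annuli tangent to $T_1$, gives an explicit subsolution with $w\geq cx_n$. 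Either route works.

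\textbf{Step 3: comparison.} We apply \Cref{ABPw} on $D$ with $\xi=0$, comparing the subsolution $w$ (with $f_1=0$) against $u$ (with $f_2=f$). On $\partial D$ we have $w\leq u$: on $\partial B_{1/8}(e_n/2)$ by Step 1, and on $\partial B_{3/4}^+$ because $u\geq 0$ there (on $T_1$ by assumption, and in $B_1^+$ by nonnegativity). Hence
\begin{equation*}
w-u\leq C\|f\|_{L^q}^{1/(p-1)} \quad \text{in } D.
\end{equation*}
Combined with Step 2, this gives the claimed bound $u\geq cx_n-C\|f\|_{L^q}^{1/(p-1)}$ on $B_{1/2}^+\cap D$. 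Inside $B_{1/8}(e_n/2)$ the bound is trivial, since there $u\geq c_0\geq c x_n$ for $c$ small.
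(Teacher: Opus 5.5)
Your proof is correct. Your second option in Step 2 is essentially the paper's proof, and your first option is a genuinely different route.

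\textbf{The paper's route and your second option.} The paper gets $u\geq c_0$ on $\overline B_{1/4}(e_n/2)$ from the Harnack inequality. It then compares $u$, via \Cref{ABPw}, with the explicit subsolution $\mu(e^{-\alpha|x-e_n/2|^2}-e^{-\alpha/4})$ on the annulus $B_{1/2}(e_n/2)\setminus\overline B_{1/4}(e_n/2)$, which is tangent to $T_1$ at $0$. It reads off $v\geq cx_n$ on the vertical segment, translates in $x'$, and fills in the rest of $B_{1/2}^+$ with Harnack. Your power barrier $|x-y|^{-\gamma}$ in interior tangent annuli plays the same role; your threshold $\gamma>(n-p)/(p-1)$ is correct.

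\textbf{Your first option.} You run a single comparison on $D=B_{3/4}^+\setminus\overline B_{1/8}(e_n/2)$, which covers all of $B_{1/2}^+$, against the $p$-harmonic function $w$. All the boundary-Hopf work is moved onto $w$. By homogeneity of the $p$-Laplacian, $w=c_0w_1$ with $w_1$ depending only on $n,p$, so its Hopf constant is uniform.

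What this buys you is that the Harnack inequality for $u$ is used only once, near $e_n/2$. No comparison region for $u$ has to be fitted along $T_1$. This matters: the annulus $B_{1/2}(e_n/2)\setminus\overline B_{1/4}(e_n/2)$ is internally tangent to $\partial B_1$ at $e_n$, so its horizontal translates leave $B_1^+$. The paper's ``similar argument'' for general $x_0'\in T_{1/2}$ therefore tacitly needs smaller annuli plus a Harnack chain for $u$ on their inner spheres.

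The price of your first option is heavier machinery. You need solvability of the Dirichlet problem, continuity of $w$ up to $\partial D$ (Wiener criterion), and the strong maximum principle. You also get a Hopf constant for $w_1$ by compactness rather than from an explicit formula. The paper's route is elementary once \Cref{ABPw} and the Harnack inequality are available.

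\textbf{Two small corrections.} First, drop the variant with the barrier centered at $y=(y',-1/4)$. There $|x-y|^{-\gamma}$ is largest near $T_1$, where $u$ may vanish, so it cannot be placed below $u$ on the boundary of a comparison region. That is an exterior-ball configuration, suited to upper bounds; only interior annuli tangent to $T_1$ work.

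Second, the inequality displayed in Step 1 is the full Harnack inequality $\sup\leq C(\inf+\|f\|_{L^q}^{1/(p-1)})$. You may use it because $u$ is a solution. The weak Harnack inequality for supersolutions alone controls only an $L^s$ average, and would not turn the point value $u(e_n/2)\geq 1$ into a lower bound.
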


\begin{proof}
Since $u\geq 0$, $u(e_n/2) \geq 1$ and $\|f\|_{L^q(B_1^+)}\leq \delta_0$ for some $q>n/p$, by the Harnack inequality (see \cite[Theorem 3]{MR170096} or \cite{MR226198, MR3931688}), there exists a constant $c_0 > 0$ (depending only on $n, p, q$) such that
\begin{equation*}
  u \geq c_0~~\mbox{ in }~~\overline{B}_{1/4}(e_n/2).
\end{equation*}

Consider the annular domain $\Omega = B_{1/2}(e_n/2) \setminus \overline{B}_{1/4}(e_n/2)$. We define the barrier function:
\begin{equation*}
  v(x) =\mu\left(e^{-\alpha |x - e_n/2|^2} - e^{-\alpha/4}\right),
\end{equation*}
where $\alpha, \mu>0$ are to be chosen later.
A direct computation shows that, by choosing $\alpha$ large enough (depending only on $n$ and $p$),
and then choosing $\mu > 0$ sufficiently small, we have
\begin{equation*}
  \begin{cases}
  \mathrm{div}(|Dv|^{p-2}Dv)
  \geq 0 &~~ \mbox{ in }~~\Omega,\\
  v\leq c_0 &~~\mbox{ on }~~\partial B_{1/4}(e_n/2),\\
  v= 0 &~~\mbox{ on }~~\partial B_{1/2}(e_n/2).
  \end{cases}
\end{equation*}
By \Cref{ABPw}, we have
\begin{equation*}
  u(x) \geq v(x) - C\|f\|_{L^q(B_1^+)}^{1/(p-1)} \quad \mbox{ in } \Omega,
\end{equation*}
where $C > 0$ depends only on $n, p, q$.

By a direct calculation,
\begin{equation*}
  v(x)\geq cx_n \quad ~~\mbox{ on }~~\{x'=0,0\leq x_n\leq 1/4\}.
\end{equation*}
Then, we obtain
\begin{equation*}
  u(x) \geq c x_n - C\|f\|_{L^q(B_1^+)}^{1/(p-1)} \quad \mbox{ on }~~\{x'=0,0\leq x_n\leq 1/4\}.
\end{equation*}
For any $x'_0 \in T_{1/2}$, by a similar argument as above, we have
\begin{equation*}
 u(x) \geq c x_n - C\|f\|_{L^q(B_1^+)}^{1/(p-1)} \quad \mbox{ on }~~\{x'=x'_0,0\leq x_n\leq 1/4\}.
\end{equation*}

Finally, by combining with the Harnack inequality,
\begin{equation*}
  u(x) \geq c x_n - C\|f\|_{L^q(B_1^+)}^{1/(p-1)} \quad \mbox{in } B^+_{1/2}.
\end{equation*}
\end{proof}

\medskip

We now prove \Cref{t.Hopf} by the iteration scheme developed in \cite{MR4713521,DLL_2025}.


\begin{proof}
First, let us make some normalization. The normalization argument is similar to that in \cite[Step 1 in Section 3.2]{DLL_2025}.
Let
\begin{equation*}
	\omega(r)=\max(\omega_{\Omega}(r),\hat{C}\omega_f^{1/(p-1)}(r)),\quad \forall ~0<r<1,
\end{equation*}
where $\hat{C}>0$ will be specified later. Without loss of generality, we assume
\begin{equation}\label{e2.2}
	u(e_n/2)\geq 1, \quad \omega(1)\leq \eta^2,\quad \int_0^{1}\frac{\omega(r)}{r}dr\leq \eta^2,
\end{equation}
where $0<\eta<1/8$ will be chosen later.

Otherwise, we normalize the problem as follows. Since $\Omega$ satisfies the interior $C^{1,\mathrm{Dini}}$ condition at $0\in\partial\Omega$, there exists $0<r_1<1/4$ depending on $n,p,q$ and $\omega_\Omega$ such that
\begin{equation}\label{e2.5}
	\omega_{\Omega}(r_1)\leq \eta^2/2,\quad \int_0^{r_1}\frac{\omega_{\Omega}(r)}{r}dr\leq \eta^2/2.
\end{equation}
Let $x_0=r_1 e_n/2$.
By the Harnack inequality (see \cite[Theorem 5]{MR170096} or \cite{MR226198}) and the assumption
\begin{equation}\label{epsilon0}
  \|f\|^{1/(p-1)}_{C_q^{-1,\text{Dini}}(0)}\leq \varepsilon_0 u(e_n/2),
\end{equation}
there exists a constant $0<c<1$, depending on $n,p,q$ and $\omega_\Omega$, such that
\begin{equation}\label{nor}
	u\left(x_0\right)\geq c u\left( e_n/2\right)-C\varepsilon_0  u\left( e_n/2\right)
	\geq \frac{c}{2}u\left( e_n/2\right)
\end{equation}
by taking $\varepsilon_0\leq c/(2C)$.

Now define
\begin{equation*}
	y=\frac{x}{r_1},  \quad
	\tilde{u}(y)=\frac{u(x)}{u(x_0)}.
\end{equation*}
Then
\begin{equation*}
	\mathrm{div}(|D \tilde{u}|^{p-2}D \tilde{u})=\tilde{f}\quad  ~~\mbox{ in }~~ \widetilde \Omega_1,
\end{equation*}
where
\begin{equation*}
	\tilde f(y)=\frac{r_1^p f(x)}{(u(x_0))^{p-1}}, \quad
	\widetilde \Omega=\frac{\Omega}{r_1}.
\end{equation*}
Clearly, we have
\begin{equation}\label{e2.1}
	\tilde{u}(e_n/2)=1.
\end{equation}
For any $0<r<1$,
\begin{equation*}
	\begin{aligned}
		\|\tilde{f}\|^*_{L^{q}(\tilde{\Omega}_r)}
		=\frac{r_1^{p}}{(u(x_0))^{p-1}}\|f\|^*_{L^{q}(\Omega_{r_1r})}
		\leq \frac{r_1^{p}}{(u(x_0))^{p-1}}
		(r_1 r)^{-1}
		\omega_f(r_1 r)
		=r^{-1}\cdot \frac{r_1^{p-1}}{(u(x_0))^{p-1}} \omega_f(r_1 r).
	\end{aligned}
\end{equation*}
By setting
\begin{equation*}
  \omega_{\tilde f}(r)=\frac{r_1^{p-1}}{(u(x_0))^{p-1}} \omega_f(r_1 r),
\end{equation*}
we have
\begin{equation*}
  \|\tilde{f}\|^*_{L^{q}(\tilde{\Omega}_r)}\leq r^{-1} \omega_{\tilde f}(r).
\end{equation*}

Thus, by combining with \eqref{epsilon0} and \eqref{nor}, we have
\begin{equation}\label{e2.4-0}
	\hat{C}\omega_{\tilde{f}}^{1/(p-1)}(1)=\frac{\hat{C}r_1}{u(x_0)}\omega_f^{1/(p-1)}(r_1)
	\leq \frac{\hat{C} r_1 \varepsilon_0u(e_n/2)}{c/2 \cdot u(e_n/2)}\leq \eta^2/2
\end{equation}
by taking $\varepsilon_0$ small enough. Similarly,
\begin{equation}\label{e2.4}
	\int_{0}^{1} \frac{\hat{C}\omega_{\tilde{f}}^{1/(p-1)}(r)}{r}dr=
	\frac{\hat{C}r_1}{u(x_0)}\int_{0}^{r_1} \frac{\omega_{f}^{1/(p-1)}(s)}{s}ds
	\leq \frac{\hat{C} r_1 \varepsilon_0u(e_n/2)}{c/2 \cdot u(e_n/2)}
    \leq \eta^2/2.
\end{equation}
Finally, let us consider $\tilde{\Omega}$. Define
\begin{equation*}
	\omega_{\tilde{\Omega}}(r)=\omega_{\Omega}(r_1r),\quad \forall ~0<r<1.
\end{equation*}
By noting \eqref{e2.5},
\begin{equation}\label{e2.6}
	\omega_{\tilde{\Omega}}(1)=\omega_{\Omega}(r_1)\leq \eta^2/2, \quad
	\int_{0}^{1} \frac{\omega_{\tilde{\Omega}}(r)}{r}dr
	=\int_{0}^{r_1}\frac{\omega_{\Omega}(s)}{s}ds\leq \eta^2/2.
\end{equation}
Consequently, the normalized quantities $\tilde{u}$, $\tilde{f}$ and $\tilde{\Omega}$ satisfy the assumptions in \eqref{e2.2}. Therefore, throughout the proof, we assume that \eqref{e2.2} holds for $u$, $f$ and $\Omega$.

\medskip

Secondly, we show the following claim:
there exist two constants $\bar C\geq 1$, $\tilde a$ and a nonnegative sequence $a_k$ $(k\geq -1)$ such that for any $k=0,1,2,...$,
we have
\begin{equation}\label{3.7'}
	\inf_{\Omega^+_{\eta^{k+1}}}(u-\tilde a x_n+a_k x_n)\geq - \eta^k A_k,
\end{equation}
\begin{equation}\label{3.8'}
	a_k-a_{k-1}\leq \bar C  A_k
\end{equation}
and
\begin{equation}\label{3.81'}
	a_k\leq \tilde a/2,
\end{equation}
where
\begin{equation*}
	A_0=\eta^2,\quad A_{k+1}=\max\{\omega(\eta^k),\eta^{\bar \alpha/2}A_{k}\}
\end{equation*}
and $\bar \alpha$ is defined as in \Cref{t.p-Lap.flat} and $\bar C\geq 1$, $\tilde a$ depend only on $n,p$ and $q$.

\medskip

For $k=0$, the interior $C^{1,\mathrm{Dini}}$ condition gives
\begin{equation*}
  B_{1/2}^++\omega(1)e_n\subset\Omega_1.
\end{equation*}
By the Harnack inequality, $u((1/4+\omega(1))e_n)\geq c_0>0$,
where $c_0$ depends only on $n,p,q$, provided that $\hat C$ is sufficiently large.
Applying \Cref{hpflat} after translation, scaling and division by $c_0$,
there exists $0<c_1<1/2$, depending only on $n,p,q$, such that
\begin{equation*}
  u(x)\geq c_1(x_n-\omega(1))-C\|f\|_{L^q(\Omega_1)}^{1/(p-1)}
\geq c_1 x_n-\frac{3}{2}c_1\omega(1)
\quad\mathrm{in}\ \ B^+_{1/4}+\omega(1)e_n.
\end{equation*}
Here $\hat C$ is chosen so large that
\begin{equation*}
  C\|f\|_{L^q(\Omega_1)}^{1/(p-1)}
\leq C\omega_f^{1/(p-1)}(1)
\leq\frac{c_1}{2}\omega(1).
\end{equation*}
Set $\eta_1=1/8$. Since
\begin{equation*}
  B_{\eta_1}\cap\{x_n>\omega(1)\}
\subset B_{1/4}^++\omega(1)e_n
\end{equation*}
and $u\geq0$, we have
\begin{equation*}
  u(x)\geq c_1 x_n-\frac{3}{2}c_1\omega(1) \quad \mathrm{in}\ \ \Omega^+_{\eta_1}.
\end{equation*}
Then by setting
\begin{equation*}
  \tilde a= c_1,\quad a_{-1}=a_0=0
\end{equation*}
and $\eta\leq \eta_1$, we deduce that
\begin{equation*}
  \inf_{\Omega_\eta^+}(u- c_1 x_n)
\geq \inf_{\Omega_{\eta_1}^+}(u- c_1 x_n)
\geq -\frac{3}{2}c_1 \omega(1)\geq -\frac{3}{2}c_1 \eta^2\geq -\eta^2.
\end{equation*}
Hence \eqref{3.7'}, \eqref{3.8'} and \eqref{3.81'} hold for $k=0$.

Suppose that they hold for $k$. We prove that they also hold for $k+1$. Let $r=\eta^{k+1}$ and $v$ be a viscosity solution of
\begin{equation}\label{v'}
	\left\{
	\begin{aligned}
		&\mathrm{div}(|Dv+\tilde a e_n-a_k e_n|^{p-2}(Dv+\tilde a e_n-a_k e_n))= 0  &\mathrm{in} \ \   &  B_r^+,\\
		&v=0\   &\mathrm{on}   \ \ & T_r,\\
		&v= -\eta^k A_k     &\mathrm{on}  \ \  & \partial B_r^+\backslash T_r.
	\end{aligned}
	\right.
\end{equation}
Note that
\begin{equation*}
  v\in \mathcal{S}(1,p-1,0).
\end{equation*}
By \Cref{t.p-Lap.flat}, there exists a constant $a_r$ such that
\begin{equation}\label{vh2}
	0\leq a_r\leq \frac{C_1}{r}\|v\|_{L^\infty(B_{r}^+)}
	\leq \frac{C_1}{r}\eta^k A_k
	= \frac{C_1}{\eta} A_k
\end{equation}
and
\begin{equation}\label{vh3}
	\begin{aligned}
		\|v+a_r x_n\|_{L^\infty(B_{\eta^{k+2}}^+)}	
		=&\|v+a_r x_n\|_{L^\infty(B_{\eta r}^+)}
		\leq \frac{C_1}{r^{1+\bar \alpha}}(\eta r)^{1+\bar \alpha}\|v\|_{L^\infty(B_{r}^+)}\\
		\leq& C_1\eta^{1+\bar \alpha}\eta^k A_k
		\leq C_1 \eta^{\bar \alpha/2} \eta^{k+1} A_{k+1},
	\end{aligned}
\end{equation}
where $\bar\alpha$ and $C_1$ are as in \Cref{t.p-Lap.flat}.

\medskip

Set
\begin{equation*}
  \hat u=u-\tilde a x_n+a_k x_n.
\end{equation*}
It then follows from $u\geq 0$ and $a_k\geq 0$ that  $\hat{u}$ satisfies
\begin{equation}\label{tilde_u-hopf}
	\begin{cases}
		\mathrm{div} (|D\hat{u}+\tilde a e_n-a_k e_n|^{p-2}(D\hat{u}+\tilde a e_n-a_k e_n))= f  &~~\mbox{ in }~~  \Omega_r^+,\\
		\hat{u}\geq -\tilde a x_n   &~~\mbox{ on }~~     \partial\Omega_r^+\cap ( B_r\cup T_r),\\
		\hat{u}\geq -\eta^k A_k   &~~\mbox{ on }~~  \partial\Omega_r^+\cap (\partial B_r^+\backslash T_r).
	\end{cases}
\end{equation}
By Lemma \ref{ABPw}, we deduce that
\begin{equation*}
  \inf_{\Omega^+_{\eta^{k+2}}} (\hat u-v)
\geq \inf_{\Omega^+_{r}} (\hat u-v)
\geq -\tilde a r\omega(r)-C_2 r^{\frac{p}{p-1}-\frac{n}{q(p-1)}}\|f\|^{\frac{1}{p-1}}_{L^{q}(\Omega_{r}^+)},
\end{equation*}
where $C_2$ depends on $n,p$ and $q$. We compute that
\begin{equation*}
  \tilde a r\omega(r)= c_1 \eta^{k+1}\omega(\eta^{k+1})
\leq c_1 \eta^{k+1}A_{k+1}
\leq \frac{1}{2}\eta^{k+1}A_{k+1}.
\end{equation*}
By taking $\hat{C}$ large enough,
\begin{equation*}
	\begin{aligned}
		C_2 r^{\frac{p}{p-1}-\frac{n}{q(p-1)}}\|f\|^{\frac{1}{p-1}}_{L^{q}(\Omega_r^+)}
		\leq \frac{1}{4}\hat{C}r\omega_f^{1/(p-1)}(r)\leq\frac{1}{4} r\omega(r)\leq \frac{1}{4} \eta^{k+1}A_{k+1}.
	\end{aligned}
\end{equation*}
Then,
\begin{equation}\label{wh}
	\inf_{\Omega^+_{\eta^{k+2}}} (\hat u-v)
	\geq -\frac{3}{4}\eta^{k+1}A_{k+1}.
\end{equation}

Set
\begin{equation*}
  a_{k+1}=a_k+a_r.
\end{equation*}
By taking $\eta$ small enough such that
\begin{equation*}
  \eta\leq \eta_1, \quad C_1\eta^{\bar\alpha/2}\leq 1/4,
\end{equation*}
we then derive from \eqref{vh3} that
\begin{equation*}
  	\|v+a_r x_n\|_{L^\infty(B_{\eta^{k+2}}^+)}	
\leq C_1 \eta^{\bar \alpha/2} \eta^{k+1} A_{k+1}\leq 1/4\eta^{k+1} A_{k+1}.
\end{equation*}
It then follows from the above estimate and \eqref{wh} that
\begin{equation*}
	\begin{aligned}
		\inf_{\Omega_{\eta^{k+2}}^+}(u-\tilde a x_n+a_{k+1}x_n)
		&=\inf_{\Omega_{\eta^{k+2}}^+}(u-\tilde a x_n+a_{k}x_n-v+v+a_r x_n)\\
		&=\inf_{\Omega_{\eta^{k+2}}^+}(\hat u-v+v+a_r x_n)
		\geq  -\eta^{k+1} A_{k+1}.
	\end{aligned}
\end{equation*}
In addition, from \eqref{vh2},
\begin{equation*}
  a_{k+1}-a_k=a_r
\leq \frac{C_1}{\eta} A_{k}
\leq \frac{C_1}{\eta^{1+\frac{\bar \alpha}{2}}} A_{k+1}
:=\bar C A_{k+1}.
\end{equation*}
Recall that $\sum\limits_{i=0}^{\infty} A_i\leq 4\eta^2$ (see \eqref{Ak}).
Then by choosing $\eta$ small enough we have
\begin{equation*}
  a_{k+1}\leq \sum\limits_{i=0}^{k+1}|a_i-a_{i-1}|
\leq \bar C \sum\limits_{i=0}^{\infty} A_i\leq 4C_1 \eta^{1-\frac{\bar \alpha}{2}}\leq \tilde a/2.
\end{equation*}
Consequently, \eqref{3.7'}, \eqref{3.8'} and \eqref{3.81'} hold for $k+1$. By induction, the claim is proved.

By an argument similar to that in \cite[Step 3]{DLL_2025}, we obtain
\begin{equation*}
  u(rl)\geq cl_n r,\quad \forall 0<r<\eta,
\end{equation*}
for every unit vector $l$ with $l_n>0$.

\end{proof}

\section{Boundary pointwise $C^1$ regularity}
\label{sec:C1}
In this section, we prove \Cref{t.C1}. First, we establish the following key lemma.

\begin{lemma}\label{k1}
Let $\xi\in \mathbb{R}^n$, and let $\bar\alpha,\bar C$ be as in \Cref{t.p-Lap.flat}. For any $0<\alpha<\bar\alpha$, there exists $\delta>0$ such that if $u$ satisfies
\begin{equation*}
	\begin{cases}
	\mathrm{div}(|Du+\xi|^{p-2}(Du+\xi))  =f   &~~\mbox{ in }~~\ \Omega_1,\\
	u=g   &~~\mbox{ on }~~(\partial\Omega)_1,
	\end{cases}
\end{equation*}
with $\|u\|_{L^\infty(\Omega_1)}\leq 1$, $\|f\|_{L^\infty(\Omega_1)}^{1/(p-1)}\leq\delta$, $\|g\|_{L^\infty((\partial\Omega)_1)}\leq\delta$ and $\operatorname{osc}_{B_1}\partial\Omega\leq\delta$,
then there exists a constant $a$ such that
\begin{equation*}
  \|u-a x_n\|_{L^\infty(\Omega_{\eta})}\leq \eta^{1+\alpha}
\end{equation*}
and
\begin{equation*}
  |a|\leq \bar C,
\end{equation*}
where $\eta$ depends only on $n$, $p$ and $\alpha$.
\end{lemma}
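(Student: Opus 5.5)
We argue by compactness and contradiction. First fix $\eta$ using \Cref{t.p-Lap.flat}: choose $0<\eta<1/4$ with $C_1\eta^{\bar\alpha-\alpha}\le 1/4$ (more precisely, $C_1 2^{1+\bar\alpha}\eta^{\bar\alpha-\alpha}\le 1/2$, accounting for the radius $1/2$ used below). Here $\bar C$ is the constant bounding $|a_r|$ in \eqref{alpha2} with $r=1/2$, i.e. $2C_1$. Suppose the lemma fails for this $\eta$. Then there are sequences $\xi_m\in\mathbb{R}^n$, domains $\Omega^m$ with $\operatorname{osc}_{B_1}\partial\Omega^m\le 1/m$, data $f_m,g_m$ with $\|f_m\|^{1/(p-1)}_{L^\infty}\le 1/m$ and $\|g_m\|_{L^\infty}\le 1/m$, and solutions $u_m$ with $\|u_m\|_{L^\infty(\Omega^m_1)}\le1$, such that $\|u_m-ax_n\|_{L^\infty(\Omega^m_\eta)}>\eta^{1+\alpha}$ for every $|a|\le\bar C$.

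The key step is equicontinuity of $\{u_m\}$, uniform in $\xi_m$, up to the (moving) boundary on $\overline{\Omega^m_{3/4}}$. In the interior we use \Cref{l3.3}: it gives a $C^\beta$ bound on balls $B_\rho(x)\subset\Omega^m_1$, with $\beta$ and the constant independent of $\xi_m$, after rescaling. Rescaling $u(x+\rho y)$ preserves the form of the equation, with $\xi$ scaled to $\rho\xi$ and $f$ to $\rho^p f$, and \Cref{l3.3} is uniform in $\xi$. Near the boundary we use \Cref{d1}, applied at each boundary point $x_0\in(\partial\Omega^m)_{3/4}$ on scale $\rho$, with the solution recentred at $u_m(x_0)$. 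This gives $|u_m(x)-u_m(x_0)|\le C(\operatorname{dist}+\rho\cdot\text{flatness})$ plus $C/m$. Iterating this on dyadic scales down to the scale of the flatness error $1/m$ gives a modulus of continuity $\sigma$, independent of $m$, up to an additive $o(1)$ error.

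Combining the interior and boundary estimates in the standard way, and extending $u_m$ by $g_m$ (of size $\le 1/m$) outside, Arzel\`a–Ascoli in its approximate form yields a subsequence with $u_m\to u_\infty$ uniformly on compact subsets of $\overline{B^+_{3/4}}$. The limit satisfies $u_\infty=0$ on $T_{3/4}$ and $\|u_\infty\|_\infty\le1$.

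To identify the limit equation we split into cases. If $\xi_m$ stays bounded, pass to $\xi_m\to\xi_\infty$; stability of viscosity solutions gives $\mathrm{div}(|Du_\infty+\xi_\infty|^{p-2}(Du_\infty+\xi_\infty))=0$ in $B^+_{3/4}$. If $|\xi_m|\to\infty$, divide the equation by $|\xi_m|^{p-2}$ and write $\hat\xi_m=\xi_m/|\xi_m|\to\hat\xi$. The equation becomes $\operatorname{tr}D^2u+(p-2)\langle P(Du+\xi_m)D^2u\rangle=f_m|Du+\xi_m|^{2-p}$ near touching points, and the limit is the linear constant-coefficient equation $\operatorname{tr}D^2u_\infty+(p-2)\hat\xi_i\hat\xi_jD_{ij}u_\infty=0$. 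In both cases $u_\infty\in\mathcal S(1,p-1,0)$, and the flat estimate of \Cref{t.p-Lap.flat} (or \cite[Lemma 3.1]{MR3246039} directly) applies on $B^+_{1/2}$. This gives $a$ with $|a|\le 2C_1=\bar C$ and $\|u_\infty-ax_n\|_{L^\infty(B^+_\eta)}\le C_12^{1+\bar\alpha}\eta^{1+\bar\alpha}\le\frac12\eta^{1+\alpha}$.

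Passing to the limit, uniform convergence on $\Omega^m_\eta$, where points below $T$ lie within $1/m$ of the flat boundary and $u_m$ is small there by the boundary modulus, yields $\|u_m-ax_n\|_{L^\infty(\Omega^m_\eta)}\le\eta^{1+\alpha}$ for large $m$, a contradiction.

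The main obstacle is the uniform boundary equicontinuity independent of $\xi$. The barrier $v$ in \Cref{d1} is uniform in $\xi$ since it lies in the Pucci class by \Cref{l.vis.Pucci}, so the plan is to iterate \Cref{d1} at boundary points. A secondary obstacle is verifying the limit equation when $|\xi_m|\to\infty$, which requires the nondegeneracy of $|q+\xi_m|$ for bounded test gradients.
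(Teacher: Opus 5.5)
Your argument is correct and follows the same compactness-by-contradiction scheme as the paper: a uniform-in-$\xi$ interior H\"older bound from \Cref{l3.3}, the barrier \Cref{d1}, membership of the limit in $\mathcal S(1,p-1,0)$, and the flat boundary $C^{1,\alpha}$ estimate. Two steps are carried out differently.

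\textbf{The boundary.} The paper never proves equicontinuity up to the moving boundary. It takes locally uniform limits in the open half ball and applies \Cref{d1} once, at unit scale, to get $|u_k(x)|\le C(x_n+2/k)$ on all of $(\Omega_k)_{1/2}$. This single bound gives $u_0=0$ on $T_{1/2}$. It also controls $u_k-\bar a x_n$ in the thin strip near $\partial\Omega_k$ when one lets $k\to\infty$ in the contradiction hypothesis. So what you call the main obstacle is not really one. \Cref{d1} bounds $u$ on the whole of $\Omega_{1/2}$, and $|g_m|\le 1/m$ on the whole boundary piece. Hence that one bound already gives $|u_m(x)-u_m(x_0)|\le C(|x-x_0|+1/m)$ for every boundary point $x_0\in B_{1/2}$, and your dyadic iteration is superfluous, though harmless. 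What your version buys is an explicit treatment of the final limit step, which the paper compresses into ``letting $k\to\infty$''.

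\textbf{The limit equation.} You split on whether $\xi_m$ stays bounded, using stability and then \Cref{l.vis.Pucci} as a black box. Otherwise $|\xi_m|\to\infty$, and the limit solves $\operatorname{tr}D^2u+(p-2)\hat\xi_i\hat\xi_jD_{ij}u=0$. The paper instead verifies the Pucci inequalities for $u_0$ directly. It splits on whether $|D\varphi(x_k)+\xi_k|$ degenerates and redoes the $|P_Sx|$ perturbation in the degenerate case. Your split is cleaner and identifies the limit equation exactly. Note that the paper's degenerate alternative forces $\xi_k\to-D\varphi(x_0)$, so it sits inside your bounded case.

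One bookkeeping point: when $|\xi_m|\to\infty$ the limit is not a $p$-Laplace solution, so \Cref{t.p-Lap.flat} does not literally apply. There $C_1$ and $\bar C$ should be taken as the constants of the Pucci-class estimate \cite[Lemma 3.1]{MR3246039}, which is what the paper uses.
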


\begin{proof}
We argue by contradiction. Suppose that the conclusion fails. Then there exist $0<\alpha<\bar{\alpha}$ and sequences $\{\xi_k\}$, $\{u_k\}$, $\{f_k\}$, $\{g_k\}$ and $\{\Omega_k\}$ such that $u_k$ is a viscosity solution of
\begin{equation}\label{uk}
	\begin{cases}
     \mathrm{div}(|Du_k+\xi_k|^{p-2}(Du_k+\xi_k)) =f_k ~~&\mbox{ in }~~\ (\Omega_k)_1,\\
	 u_k=g_k   &~~\mbox{ on }~~\ (\partial\Omega_k)_1
	\end{cases}
\end{equation}
satisfying
\begin{equation*}
  \|u_k\|_{L^\infty((\Omega_k)_1)}\leq 1,
\end{equation*}
and
\begin{equation*}
	\|f_k\|_{L^\infty((\Omega_k)_1)}^{1/(p-1)}\leq1/k,\ \|g_k\|_{L^\infty((\partial\Omega_k)_1)}\leq1/k,\
		\operatorname{osc}_{B_1}\partial\Omega_k\leq1/k,
\end{equation*}
while
\begin{equation}\label{3.7}
		\|u_k-a x_n\|_{L^\infty((\Omega_k)_{\eta})}>\eta^{1+\alpha},\ \ \forall |a|\leq \bar{C}
\end{equation}
where $0<\eta<1$ is taken small so that
\begin{equation}\label{barC}
  \bar C \eta^{\bar \alpha-\alpha}<1/2.
\end{equation}
	
Clearly, the functions $u_k$ are uniformly bounded. By \Cref{l3.3}, the functions $u_k$ are equicontinuous in any compact subset in $B_1^+$. Hence, there exist a subsequence of $\{u_k\}$ (still denoted by $u_k$) and a function $u_0$ such that
\begin{equation*}
  u_k\rightarrow u_0 \ \ ~~\mbox{ uniformly in compact sets of }~~  B_{1/2}^+.
\end{equation*}

In the following, we will show that
\begin{equation}\label{e2.3}
u_0\in \mathcal{S}(1,p-1,0)~~\mbox{ in }~~B_{1/2}^+.
\end{equation}
We only prove $\mathcal{M}^-(D^2 u_0,1,p-1)\leq 0$ in the viscosity sense. As in the proof of \Cref{l.vis.Pucci}, we construct a test function $\varphi$ touching $u_0$ from below at $x_0\in B^+_{1/2}$ and satisfying $\varphi <u_0$ in $B^+_{1/2}\backslash \{x_0\}$. Since $u_k$ converges to $u_0$ uniformly in any compact subset of $B^+_{1/2}$, $\varphi+c_k$ touches $u_k$ from below at some point $x_k\in B^+_{1/2}$ where $c_k\to 0$ and $x_k\to x_0$. By the definition of viscosity solution for $u_k$, whenever $D\varphi(x_k)+\xi_k\neq 0$,
\begin{equation*}
  \left|D\varphi(x_k)+\xi_k \right|^{p-2}\left(\operatorname{tr} D^2\varphi(x_k)+(p-2)\frac{\left(D\varphi(x_k)+\xi_k\right)_i
  \left(D\varphi(x_k)+\xi_k\right)_j}{|D\varphi(x_k)+\xi_k|^{2}}\varphi_{ij}(x_k)\right)\leq f_k(x_k),
\end{equation*}

If we can choose a subsequence (still denoted by $x_k,\xi_k$) such that
\begin{equation*}
\liminf_{k\to \infty}|D\varphi(x_k)+\xi_k |>0,
\end{equation*}
we have
\begin{equation*}
  \begin{aligned}
\mathcal{M}^-(D^2 \varphi(x_0),1,p-1)=&\lim_{k\to \infty}\mathcal{M}^-(D^2 \varphi(x_k),1,p-1)\\
    \leq& \liminf_{k\to \infty}\left(\operatorname{tr} D^2\varphi(x_k)+(p-2)\frac{\left(D\varphi(x_k)+\xi_k\right)_i
  \left(D\varphi(x_k)+\xi_k\right)_j}{|D\varphi(x_k)+\xi_k|^{2}}\varphi_{ij}(x_k)\right)\\
  \leq& \liminf_{k\to \infty}|f_k(x_k)||D\varphi(x_k)+\xi_k|^{2-p}
  =0.
  \end{aligned}
\end{equation*}

Otherwise, i.e.,
\begin{equation*}
\lim_{k\to \infty}|D\varphi(x_k)+\xi_k |=0,
\end{equation*}
we can prove by contradiction as in the proof of \Cref{l.vis.Pucci}. Assume that $\mathcal{M}^-(D^2\varphi(x_0),1,p-1)>0$. Similar to the proof of \Cref{l.vis.Pucci}, we can construct $\psi$, a perturbation of $\varphi$, such that $\psi$ touches $u_0$ at $\tilde x_0\in B^+_{1/2}$ and
\begin{equation*}
\liminf_{k\to \infty}|D\psi (x_k)+\xi_k |>0,
\end{equation*}
Then, we can prove $\mathcal{M}^-(D^2 \varphi(x_0),1,p-1)\leq 0$ as well.

Next, we prove $u_0=0$ on $T_{1/2}$.
By \Cref{d1},
\begin{equation*}
  |u_k (x)| \leq C(x_n+2/k),\ \forall x \in \Omega_k \cap B_{1/2}.
\end{equation*}
For any $x\in B^+_{1/2}$, by taking $k\rightarrow \infty$, we have
\begin{equation*}
  |u_0(x)|\leq Cx_n.
\end{equation*}
Therefore, $u_0$ is continuous up to $T_{1/2}$ and $u_0\equiv 0$ on $T_{1/2}$.

By the boundary $C^{1,\alpha}$ regularity for fully nonlinear equations (see \cite[Lemma 3.1]{MR3246039}), there exist $\bar\alpha,\bar{C}$ (depending on $n,p$) and $\bar a$ with $|\bar a|\leq \bar C$ such that
\begin{equation*}
	|u_0-\bar a x_n|\leq \bar{C}\eta^{1+\bar\alpha}\leq \eta^{1+\alpha}/2  \ \ ~~\mbox{ in }~~ \ {B_{\eta}^+},
\end{equation*}
where \eqref{barC} is used in the last inequality.
	
Letting $k\rightarrow \infty$ in \eqref{3.7} with $a=\bar a$, we deduce that
\begin{equation*}
  \|u_0-\bar a x_n\|_{L^{\infty}(B_{\eta}^+)}\geq \eta^{1+\alpha}.
\end{equation*}
Consequently, we get a contradiction.
\end{proof}

We are now in a position to prove \Cref{t.C1}.

\begin{proof}
We first make some normalizations. Let $\delta$ and $\bar C$ be as in \Cref{k1}. Set
\begin{equation*}
  M=\|u\|_{L^\infty(\Omega_1)}+\delta^{-1}\left(
\|f\|_{L^\infty(\Omega_1)}^{1/(p-1)}+2\|g\|_{C^{1,\mathrm{Dini}}(0)}\right).
\end{equation*}
Define
\begin{equation*}
  z=\frac{x}{r_0},\qquad \widetilde\Omega=\frac{\Omega}{r_0},\qquad
\tilde u(z)=\frac{u(x)-g(0)-Dg(0)\cdot x}{M},
\end{equation*}
where $0<r_0<1$ is chosen sufficiently small, depending on $n,p$ and $\omega_\Omega$.
Then $\tilde u$ satisfies
\begin{equation*}
  \begin{cases}
\mathrm{div}\left(|D\tilde u+\xi_0|^{p-2}(D\tilde u+\xi_0)\right)=\tilde f
&\mbox{in }\ \ \ \ \widetilde\Omega_1,\\
\tilde u=\tilde g&\mbox{on }\ \ (\partial\widetilde\Omega)_1,
\end{cases}
\end{equation*}
where
\begin{equation*}
  \xi_0=\frac{r_0Dg(0)}M,\qquad
\tilde f(z)=\frac{r_0^p f(x)}{M^{p-1}},\qquad
\tilde g(z)=\frac{g(x)-g(0)-Dg(0)\cdot x}{M}.
\end{equation*}
The corresponding moduli are
\begin{equation*}
  \omega_{\tilde g}(r)=\frac{r_0}{M}\omega_g(r_0r),\qquad
\omega_{\widetilde\Omega}(r)=\omega_\Omega(r_0r).
\end{equation*}
Hence,
\begin{equation*}
  \tilde u(0)=\tilde g(0)=0\quad \mbox{and} \quad D\tilde g(0)=0,
\end{equation*}
and
\begin{equation}\label{eq:c1-smallness}
\begin{gathered}
\|\tilde u\|_{L^\infty(\widetilde\Omega_1)}\leq 1,\qquad
\|\tilde f\|_{L^\infty(\widetilde\Omega_1)}^{1/(p-1)}\leq \delta,\\
\|\tilde g\|_{C^{1,\mathrm{Dini}}(0)}\leq \delta/2,\qquad
\|\partial \widetilde\Omega \|_{C^{1,\mathrm{Dini}}(0)}\leq \delta/(8\bar C).
\end{gathered}
\end{equation}
The last inequality can be achieved by choosing $r_0$ sufficiently small.
Finally, we define
\begin{equation}\label{eq:c1-modulus}
\omega(r)=\max\left\{
\frac{\omega_{\tilde g}(r)}{\|\tilde g\|_{C^{1,\mathrm{Dini}}(0)}},
\frac{\omega_{\widetilde\Omega}(r)}{\|\partial \widetilde\Omega\|_{C^{1,\mathrm{Dini}}(0)}}
\right\}.
\end{equation}

To prove that $\tilde u$ is $C^1$ at $0$, it suffices to establish the following claim by induction: there exist $0<\eta<1$ and a sequence $\{a_k\}$, $k\geq -1$, such that, for every $k\geq 0$,
\begin{equation}\label{eq:c1-induction-error}
\|\tilde u-a_k z_n \|_{L^\infty(\widetilde\Omega_{\eta^k})}\leq \eta^k A_k
\end{equation}
and
\begin{equation}\label{eq:c1-induction-slope}
|a_k-a_{k-1}|\le\bar C A_{k-1},
\end{equation}
where
\begin{equation}\label{eq:c1-A}
A_{-1}=A_0=1,\qquad
A_k=\max\{\omega(\eta^k),\eta^{\bar\alpha/2}A_{k-1},\eta^{k/(p-1)}\}
\quad(k\geq 1),
\end{equation}
where $\bar\alpha$ and $\bar C$ are the constants in \Cref{k1}, and $\eta$ depends only on $n$ and $p$.

For $k=0$, the conclusion is immediate by taking $a_{-1}=a_0=0$. Assume that the claim holds for some $k\geq 0$. Define
\begin{equation*}
  y=\frac{z}{\eta^k},\qquad
v(y)=\frac{\tilde u(z)-a_k z_n}{\eta^k A_k},\qquad
\hat\Omega=\frac{\widetilde\Omega}{\eta^k}.
\end{equation*}
Then $v$ satisfies
\begin{equation}\label{eq:c1-rescaled-equation}
\begin{cases}
\displaystyle\mathrm{div}\left(
\left|Dv+\frac{a_ke_n+\xi_0}{A_k}\right|^{p-2}
\left(Dv+\frac{a_ke_n+\xi_0}{A_k}\right)\right)=\hat f
&\mbox {in } \hat\Omega_1,\\
v=\hat g&\mbox{on }(\partial\hat\Omega)_1,
\end{cases}
\end{equation}
where
\begin{equation*}
  \hat f(y)=\frac{\eta^k \tilde f(z)}{A_k^{p-1}},\qquad
\hat g(y)=\frac{\tilde g(z)-a_k z_n}{\eta^k A_k}.
\end{equation*}
By the induction hypothesis \eqref{eq:c1-induction-error},
\begin{equation*}
  \|v\|_{L^\infty(\hat\Omega_1)}\leq 1.
\end{equation*}
By \eqref{eq:c1-A}, we have $\eta^{k/(p-1)}\leq A_k$. Then
\begin{equation*}
  \|\hat f\|_{L^\infty(\hat\Omega_1)}^{1/(p-1)}
=\frac{\eta^{k/(p-1)}\|\tilde f\|_{L^\infty(\widetilde\Omega_{\eta^k})}^{1/(p-1)}}{A_k}
\le\frac{\eta^{k/(p-1)}\delta}{A_k}\le\delta.
\end{equation*}
Similarly,
\begin{equation*}
  \operatorname{osc}_{B_1}\partial \hat\Omega
=\frac1{\eta^k}\operatorname{osc}_{B_{\eta^k}}\partial \widetilde\Omega \leq \delta.
\end{equation*}

On the other hand, using the definition of $A_k$, one can show that
\begin{equation*}
  \sum_{i=0}^\infty A_i\leq 4
\end{equation*}
by choosing $\eta$ sufficiently small (see \cite[(3.15) on p. 11]{DLL_2025} for the proof).
Therefore,
\begin{equation*}
  |a_k|\le\sum_{i=1}^k|a_i-a_{i-1}|
\leq \bar C\sum_{i=0}^\infty A_i\leq 4\bar C.
\end{equation*}
Combining this estimate with the assumptions in \eqref{eq:c1-smallness}, we obtain
\begin{equation}\label{eq:c1-boundary-smallness}
\begin{split}
\|\hat g\|_{L^\infty((\partial \hat\Omega)_1)}
&=\frac1{\eta^k A_k}\|\tilde g(z)-a_k z_n\|_{L^\infty((\partial \widetilde\Omega)_{\eta^k})}\\
&\leq \frac1{\eta^k A_k}\left(
\|\tilde g(z)\|_{L^\infty((\partial \widetilde\Omega)_{\eta^k})}
+|a_k|\|z_n\|_{L^\infty((\partial \widetilde\Omega)_{\eta^k})}\right)\\
&\leq \frac{\eta^k\omega(\eta^k)}{\eta^k A_k}
\left(\|\tilde g(z)\|_{C^{1,\mathrm{Dini}}(0)}
+4\bar C\|\partial\widetilde\Omega\|_{C^{1,\mathrm{Dini}}(0)}\right)\le\delta.
\end{split}
\end{equation}
Hence, $v(y)$, $\hat f(y)$, $\hat g(y)$ and $\hat\Omega$ satisfy the assumptions of \Cref{k1}. Applying \Cref{k1} with $\alpha=\bar\alpha/2$, there exists a constant $\bar a$ such that
\begin{equation*}
  \|v-\bar a y_n\|_{L^\infty(\hat\Omega_\eta)}\leq \eta^{1+\bar\alpha/2}
\end{equation*}
and
\begin{equation*}
  |\bar a|\leq \bar C.
\end{equation*}
Equivalently,
\begin{equation*}
  \left\|\frac{\tilde u(z)-a_k z_n}{\eta^k A_k}
-\bar a\frac{z_n}{\eta^k}\right\|_{L^\infty(\widetilde\Omega_{\eta^{k+1}})}
\leq \eta^{1+\bar\alpha/2}.
\end{equation*}
By setting $a_{k+1}=a_k+\bar a A_k$, we have
\begin{equation*}
  |a_{k+1}-a_k|\leq |\bar a|A_k\leq \bar C A_k
\end{equation*}
and
\begin{equation*}
  \|\tilde u-a_{k+1} z_n\|_{L^\infty(\widetilde\Omega_{\eta^{k+1}})}
\leq \eta^k A_k\eta^{1+\bar\alpha/2}\le\eta^{k+1}A_{k+1}.
\end{equation*}
Thus, the induction is complete.

The conclusion $u \in C^1(0)$ then follows by an argument similar to that in \cite[Lemma 3.16]{lian2020pointwise}.
\end{proof}


\medskip

\printbibliography

\end{document}